\newcommand{\abs}[1]{\left\vert #1 \right\vert}
\newcommand{\floor}[1]{\left\lfloor #1 \right\rfloor}
\newcommand{\Spinc}{\text{Spin}^c}
\newcommand{\Spin}{\text{Spin}}
\DeclareMathOperator{\PD}{PD}
\DeclareMathOperator{\Char}{Char}
\DeclareMathOperator{\Hom}{Hom}
\DeclareMathOperator{\id}{id}
\theoremstyle{plain}
\newtheorem{thm}{Theorem}[section]
\newtheorem{defn}[thm]{Definition}
\newtheorem{lem}[thm]{Lemma}
\newtheorem{prop}[thm]{Proposition}
\newtheorem{cor}[thm]{Corollary}
\title{Deficiency Symmetries of Surgeries in $S^3$}
\author{Julian Gibbons}
\address{Department of Mathematics, Imperial College London}
\email{j.gibbons09@imperial.ac.uk}
\begin{document}

\begin{abstract}
We examine certain symmetries in the deficiencies of a rational surgery on a knot in $S^3$ by comparing the $\Spinc$-structures on the rational surgery with those on a related integral surgery. We then provide an application of these symmetries in the form of a theorem that obstructs Dehn surgeries in $S^3$. This last part unifies and generalises theorems by Greene underlying his work on the cabling conjecture, the lens space realisation problem, and the unknotting number of alternating 3-braids.
\end{abstract}

\maketitle

\section{Introduction}
\label{introduction}

In \cite{GreeneGenus}, Greene laid the basics for the follow beautiful theorem. He proved that if a lens space $L(p^\prime,q^\prime)$ (using the convention of $-p^\prime/q^\prime$-surgery on the unknot) is obtained by $p^\prime$-surgery on a knot $C \subset S^3$, then there exists an integral matrix $A$ such that
$$-AA^t = Q_X \oplus (-p^\prime),$$
where
\begin{enumerate}
\item $Q_X$ is the adjacency matrix of the linear graph with weights $-b_i$ appearing in the Hirzebruch-Jung continued fraction $p^\prime/q^\prime = [b_1,\dots,b_\ell]$ (i.e. $Q_X$ represents the intersection form of the graph's corresponding plumbed 4-manifold $X$); and
\item The entries of the final row of $A$ form a \emph{changemaker set} (i.e. a collection of non-negative integers $\sigma_i$ with the property that, given coins worth $\sigma_i$, one can make up any value from zero to their sum).
\end{enumerate}
By combining this theorem with some ingenious combinatorics, Greene was able to achieve spectacular success resolving the long-standing lens space realisation problem \cite{GreeneLens}. A similar theorem, involving the double branched cover of an alternating 3-braid $K$ and half-integral surgeries, also allowed him to classify the $K$ of this type with unknotting number one \cite{GreeneBraid}.

The main idea in this paper is to take the two ``changemaker'' theorems above and unify them in one. To this end, we will largely be concerned with the \emph{deficiencies} of a $-p/q$-surgery on a knot $C$ in $S^3$, where $p , q > 0$ are coprime. These objects are defined as the differences
$$D^{p/q}_C(\mathfrak{t}) := d(S^3_{-p/q}(C),\mathfrak{t}) - d(S^3_{-p/q}(U),\mathfrak{t}),$$
where $U$ is the unknot, $\mathfrak{t}$ a $\Spinc$-structure, and $d$ the correction term of Ozsv\'ath and Szab\'o (see \cite{OSAbsolute}). Since this implicitly requires a bijection
$$\Spinc(S^3_{-p/q}(C)) \longleftrightarrow \Spinc(S^3_{-p/q}(U)),$$
we stipulate that the one used here is the standard one from the literature \cite{GreeneBraid, GreeneGenus, GreeneLens, OSUnknot}. When it is clear what we mean, we may drop the $C$ from $D^{p/q}_C$.

In order to prove our result, we begin by establishing certain symmetries among the $D^{p/q}_C(i)$. Explicitly, we have the following theorem, where $p = nq - r$ and $0 < r < q$.

\begin{thm}
\label{main}
Let $C$ be a knot in $S^3$, and let $p,q > 0$ be coprime. Then there is a function $\mathfrak{r} : \Spinc(S^3_{-p/q}(C)) \rightarrow \Spinc(S^3_{-n}(C))$ such that the following diagram commutes:
\begin{diagram}
\Spinc(S^3_{-p/q}(C)) &\rTo^{\phantom{PPP}\mathfrak{r}\phantom{PPP}} & \Spinc(S^3_{-n}(C))\\
&\rdTo_{D^{p/q}} &\dTo^{D^n} \\
& & \mathbb{Q}
\end{diagram}
and the fibres of $\mathfrak{r}$ are of size $q$, with one exception over an element of $\Spinc(S^3_{-n}(C))$ which minimises the value of $D^n$. In particular, conjugation on $\Spinc(S^3_{-n}(C))$, under which $D^n$ is invariant, lifts to a function on $\Spinc(S^3_{-p/q}(C))$ under which $D^{p/q}$ is invariant.
\end{thm}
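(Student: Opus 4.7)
The plan is to exploit the Ozsv\'ath--Szab\'o mapping cone formula for correction terms of rational surgeries on knots, as refined by Ni and Wu. This expresses
\[
D^{p/q}_C(i) \;=\; \pm 2 \max\bigl( V_{s(i)}(C),\, H_{t(i)}(C)\bigr),
\]
where $V_s, H_s$ are the standard Ozsv\'ath--Szab\'o knot invariants and $s(i), t(i)$ are explicit integer-valued functions of $i, p, q$ built from $\lfloor i/q \rfloor$ and $\lceil (i - p)/q \rceil$. Specialising to $q = 1$ with $p = n$ yields the analogous formula for $D^n_C(j)$ in which $s(j) = j$ and $t(j) = j - n$; the lens-space contribution $d(S^3_{-p/q}(U), i)$ cancels in the definition of the deficiency. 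The whole problem thus reduces to a combinatorial comparison of these two explicit expressions.

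After fixing the standard identifications $\Spinc(S^3_{-p/q}(C)) \cong \mathbb{Z}/p\mathbb{Z}$ and $\Spinc(S^3_{-n}(C)) \cong \mathbb{Z}/n\mathbb{Z}$ from the cited literature, I would write $i = aq + b$ with $0 \leq b < q$ and use $p = nq - r$ to show that the pair $(s(i), t(i))$ is essentially determined by $a = \lfloor i/q \rfloor$ alone, modulo a small threshold correction at $b = q - r$ which is absorbed by the monotonicity of $V$ and the symmetry $H_s(C) = V_{-s}(C)$ for knots in $S^3$. The map $\mathfrak{r}$ is then defined by sending $i$ to the $\Spinc$-structure on $S^3_{-n}(C)$ with the matching index pair, so commutativity of the diagram is automatic. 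The fibre sizes follow by counting: for each of the $n - 1$ values of $a$ for which $b$ ranges over all of $\{0, \ldots, q - 1\}$ we obtain a fibre of size $q$, while the last value of $a$ (where $b$ is constrained by $i < p = nq - r$) gives a fibre of size exactly $q - r$.

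To finish, I would verify that this partial block lies over the $\Spinc$-structure of $S^3_{-n}(C)$ minimising $D^n$, and that conjugation lifts. The former is a direct check: by monotonicity of $V$ and $H$, the extremal index pair reached by the partial block is exactly where the max formula attains its minimum, which is therefore the minimum of $D^n$. For the conjugation claim, the symmetry $H_s(C) = V_{-s}(C)$ combined with the fact that $\Spinc$-conjugation acts on indices as $i \mapsto -i$ (mod $p$ or $n$) makes the max formula manifestly invariant under the lift along $\mathfrak{r}$, giving the required conjugation-invariance of $D^{p/q}$. The hard part will be calibrating the index functions $s(i), t(i)$ to the specific parametrization of $\Spinc$-structures used here --- in particular, pinning down precisely which $\Spinc$-structure on $S^3_{-n}(C)$ is hit by the exceptional fibre --- since the mapping cone formula has several standard variants that differ by signs and by shifts such as $i \mapsto p - i$.
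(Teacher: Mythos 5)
Your approach via the Ni--Wu formula is the one the paper itself gestures at in the remark immediately following Lemma \ref{eq}, and it can be made to work, but as written it has a genuine gap: the map you define does not make the diagram commute, and your identification of the exceptional fibre is incorrect. Writing $i = aq + b$ with $0\leq b<q$ and $p = nq - r$, Proposition \ref{NW} gives $D^{p/q}_C(\mathfrak{t}_{aq+b}) = 2\max\{V_a,\,H_{a-n+\delta(b)}\}$ where $\delta(b)=0$ for $b<q-r$ and $\delta(b)=1$ for $b\geq q-r$. If you set $\mathfrak{r}(aq+b) = \mathfrak{t}_a$ (the ``matching index pair'' reading, which is also what your fibre count assumes), then for $\delta(b)=1$ commutativity would require $\max\{V_a,H_{a-n+1\}}=\max\{V_a,H_{a-n}\}$, and monotonicity alone does not give this: on the $H$-dominant side (where $H_{a-n}\geq V_a$) the left side is $H_{a-n+1}$ and the right side is $H_{a-n}$, which can genuinely differ. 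The threshold shift is not ``absorbed''; it changes the deficiency around the crossover index $i'$ where the nonincreasing $V$-sequence meets the nondecreasing $H$-sequence. The correct $\mathfrak{r}$ is necessarily piecewise: $\mathfrak{r}(aq+b)=\mathfrak{t}_a$ when $\delta(b)=0$, or when $\delta(b)=1$ and $a<i'$; and $\mathfrak{r}(aq+b)=\mathfrak{t}_{a+1}$ when $\delta(b)=1$ and $a\geq i'$, where $i'$ is the largest integer with $\max\{V_{i'},H_{i'-n}\}$ minimal. With that definition the index $i'$ is skipped $r$ times, so the exceptional fibre of size $q-r$ lands over $\mathfrak{t}_{i'}$, which by Lemma \ref{minimising} is a minimiser. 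Pinning down $i'$ and verifying the two regimes is precisely the case analysis in the paper's proof of Lemma \ref{eq}; it is not a bookkeeping issue about sign conventions, as your final paragraph suggests, but the structural heart of the argument.

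Your fibre counting compounds the error: the shortfall in the enumeration occurs at $a=n-1$ (where $b$ is constrained to $b<q-r$), but $\mathfrak{t}_{n-1}$ is nowhere near the minimiser, which sits at the central index $n/2$ or $(n\pm1)/2$. You cannot expect the enumeration deficit to coincide with the fibre deficit; the crossover analysis is exactly what migrates it from the top of the range to the middle. Finally, note that the paper does not in fact prove Theorem \ref{main} along these lines even in corrected form: it combines the exact equality of Lemma \ref{eq} with the family $\mathcal{K}'$ of plumbing-graph maximisers and Lemma \ref{totalineq}, and it does so deliberately, since the direct Ni--Wu argument yields no description of $\mathfrak{r}$ in terms of characteristic covectors, which is what Theorem \ref{changemaker} needs.
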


Stated thus, the theorem is in fact not difficult to prove, though the exhibition of an $\mathfrak{r}$ requires considerably more effort. We provide one example towards the middle of the paper, before using it to generalise the two changemaker theorems. We let $p/q = [a_1,\dots,a_\ell]$, where $a_i \geq 2$ for $i \geq 2$ and $n = a_1$. It is not difficult to prove that such an expansion always exists.

\begin{thm}
\label{changemaker}
Suppose that $Y = S^3_{-p/q}(C)$ for some knot $C \subset S^3$ and coprime $p,q>0$, that $W$ is the trace of the corresponding integral surgery in Figure \ref{trace}, and that $-Y$ bounds a sharp, simply connected, negative-definite smooth 4-manifold $X$ with intersection form $Q_X$ and free $H_2(X)$. Then if
$$d(Y,i) - d(S^3_{-p/q}(U),i) = 0$$
for either (a) one value of $i$ if $n$ is odd; or (b) $q-r+1$ values of $i$ if $n$ is even, there exists an integral matrix $A$ such that
$$-AA^t = Q_X\oplus Q_W.$$
In addition, if $q \neq 1$, one can choose $A$ so that its last $\ell$ rows have the form
$$\left(\begin{array}{ccccccccccccccc}
\sigma_r &\dots &\sigma_1 &1 &0 &      &  &       &  &  &      &  &  &      &  \\
         &      &         &-1&1 &\dots &1 &       &  &  &      &  &  &      &  \\
         &      &         &  &  &      &  &\ddots &  &  &      &  &  &      &  \\
         &      &         &  &  &      &  &       &-1&1 &\dots &1 &0 &      &  \\
         &      &         &  &  &      &  &       &  &  &      &-1&1 &\dots &1 \\ 
\end{array}\right),$$
where there are exactly $a_i$ non-zero entries in row $i = 2, \dots, \ell$, all $\pm 1$ as above, and $\{\sigma_i\}^r_{i=1}$ forms a changemaker set. If, on the other hand, $q = 1$, then the last row of $A$ can be chosen to have the form
$$\left(\begin{array}{ccccc} \sigma^\prime_r &\dots &\sigma^\prime_1 &\sigma^\prime_0\end{array}\right),$$
where $\{\sigma^\prime_i\}^r_{i=0}$ forms a changemaker set.
\end{thm}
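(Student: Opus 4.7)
The plan is to construct a closed, smooth, negative-definite $4$-manifold $\hat Z = X \cup_Y W$ by gluing $X$ (with $\partial X = -Y$) to the plumbed trace $W$ (with $\partial W = Y$) along their common boundary. Since $Y$ is a rational homology sphere, $X$ is simply connected with free $H_2$ by hypothesis, and $W$ is a linear plumbing (hence simply connected with free $H_2$), a Mayer--Vietoris calculation yields $H_2(\hat Z;\mathbb{Z}) \cong H_2(X)\oplus H_2(W)$ with intersection form $Q_X\oplus Q_W$. The manifold $W$ is negative-definite because the Hirzebruch--Jung expansion $[a_1,\dots,a_\ell]$ represents the positive rational $p/q$ with $a_i\geq 2$ for $i\geq 2$; together with the negative-definiteness of $X$, this makes $\hat Z$ negative-definite. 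Applying Donaldson's diagonalisation theorem embeds $Q_{\hat Z}$ isometrically into $-\mathbb{Z}^N$ for some $N$, and writing generators of $H_2(\hat Z)$, adapted to the splitting, in the standard basis of $-\mathbb{Z}^N$ produces an integer matrix $A$ with $-AA^t = Q_X\oplus Q_W$.

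To refine the shape of the last $\ell$ rows, I would examine the embedded plumbing classes one by one. For each $i=2,\dots,\ell$, the $i$-th plumbing class has self-intersection $-a_i$ and intersects only its linear-chain neighbours, each with pairing $+1$. The only way to realise this in the standard diagonal lattice is as a sum of exactly $a_i$ basis vectors with signs $\pm 1$, sharing precisely one basis direction (with opposite sign) with its neighbours; after a suitable reordering of the basis and flip of signs, this produces the displayed staircase. The class of the first plumbing vertex, of self-intersection $-n$, must then have squared norm $n$ and share exactly one basis direction (with opposite sign) with the second row, forcing its nonzero entries to take the form $(\sigma_r,\dots,\sigma_1,1)$ with $\sigma_i\geq 0$ and $\sum\sigma_i^2 = n-1$.

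The heart of the argument is showing that $(\sigma_1,\dots,\sigma_r)$ is a \emph{changemaker} set. This is where Theorem \ref{main} and the deficiency hypothesis combine. The hypothesis provides vanishing of $D^{p/q}_C$ at one (if $n$ is odd) or $q-r+1$ (if $n$ is even) specific $\Spinc$-structures on $Y$; via Theorem \ref{main}, these push forward to vanishing of $D^n_C$ at the self-conjugate $\Spinc$-structure(s) on $S^3_{-n}(C)$, i.e.\ at the Spin structure(s). The numerology of the fibres of $\mathfrak{r}$, including its exceptional fibre of size $q-r+1$, is precisely what is needed to cover all such self-conjugate structures. Sharpness of $X$ then forces the corresponding characteristic vectors in $\hat Z$ to realise the Ozsv\'ath--Szab\'o minimum in their equivalence classes. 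Computing this minimum inside the diagonal lattice $-\mathbb{Z}^N$, and comparing it with the known value for the unknot surgery $S^3_{-p/q}(U)$ (which plays the role of the model case), yields an arithmetic system on $(\sigma_1,\dots,\sigma_r)$ that is exactly Greene's changemaker criterion. The case $q=1$ collapses to his original theorem, the plumbing tail disappearing.

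The main obstacle will be the final step: upgrading equality at one (or a few) $\Spinc$-structures to the full changemaker condition for arbitrary $q$. This requires pulling the hypothesis along the fibres of $\mathfrak{r}$ while correctly handling the exceptional fibre that distinguishes the two parities of $n$, and then translating the resulting characteristic-vector minimality into the combinatorial changemaker inequalities. Everything else in the proof — the Donaldson input, the Mayer--Vietoris splitting, and the shape of the plumbing rows — is essentially bookkeeping once the 4-manifold $\hat Z$ is in place.
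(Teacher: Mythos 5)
Your first paragraph correctly reproduces the Donaldson setup from the paper: form $\hat Z = X\cup_Y W$, check it is closed, smooth, simply connected and negative-definite, and apply Donaldson diagonalisation to obtain $-AA^t = Q_X\oplus Q_W$. So far so good.

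The gap is in your second paragraph, where you claim that the staircase shape of rows $y_2,\dots,y_\ell$ and the shape of $x$ are forced by the diagonal lattice embedding alone. That is false. A class of self-intersection $-a_i$ in the standard negative-definite lattice need not be a sum of $a_i$ signed unit vectors: for $a_i = 4$ it could equal $2e_1$, for $a_i = 5$ it could equal $2e_1 + e_2$, and so on. Likewise, orthogonality $x\cdot y_i = 0$ for $i\geq 3$ does not force $x$ to vanish in the slots where $y_i$ is supported; it only forces those entries of $x$ to sum to zero after a sign normalisation, which leaves room for cancelling pairs. The constraint that every $y_i$ has entries in $\{-1,0,1\}$, that non-adjacent rows share no support, that adjacent rows overlap in exactly one slot with opposite signs, and that $x$ vanishes on the supports of $y_3,\dots,y_\ell$ — all of this is established in the paper \emph{only because} of the vanishing-deficiency hypothesis. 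The mechanism is Lemma \ref{maximisers}: if the deficiency at a given $\Spinc$-structure $\mathfrak{t}$ vanishes, sharpness of $X$ together with the bijection \eqref{biject} produces a characteristic covector $\alpha\in\{\pm 1\}^{b_2(X)+\ell}$ of $\hat Z$ restricting to a maximiser $K$ for $\mathfrak{t}$ on $W$. One then chooses $K$ one at a time (e.g.\ with a single peak at $v_i$, with peaks at $v_i$ and $v_j$, etc.) so that $D^{p/q}([K])=0$ follows from Corollary \ref{min} and Theorem \ref{main}, and plays the pairing identities $\langle\alpha,y_i\rangle = a_i$ and $y_i^2 = a_i$ off against each other, using the non-negativity of $y_{i,j}^2 + \alpha_j y_{i,j}$ with $\alpha_j=\pm 1$. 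That non-negativity trick, not lattice combinatorics, is what kills entries larger than $1$ in absolute value; the parallel arguments with two-peak covectors kill stray overlaps; and a comparison of maximal first coordinates over the families $\mathcal{S}$ and $\mathcal{S}^\prime$ pins down the entries of $x$. So what you've dismissed as bookkeeping is in fact where almost all of the proof's work, and the entire use of the deficiency hypothesis, lives.

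Your final two paragraphs identify the right philosophical ingredients — Theorem \ref{main}, the fibres of $\mathfrak{r}$, sharpness, comparison with the unknot surgery as the model case — but they do not supply an argument; they gesture at one. In the actual proof, the passage from ``one (or $q-r+1$) vanishing deficiencies'' to ``the changemaker condition'' is not a matter of pulling vanishing along $\mathfrak{r}$-fibres in the abstract. It requires computing exactly which characteristic covectors $K\in\mathcal{K}^\prime$ have first coordinate in $\{0,\pm1\}$ (or $\{0,\pm1,\pm2\}$ for the even-$n$ case), showing via Corollary \ref{min} that these have vanishing deficiency, and then extracting from Lemma \ref{maximisers} a combinatorial identity of the form $j_{\max} - 2\sum_i\chi_i\sigma_i$ ranging over exactly the integers in $[2-j_{\max}, j_{\max}]$, which is precisely the changemaker statement. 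You will need to exhibit these covectors and do the arithmetic; the hypothesis about the number of vanishing deficiencies is used to guarantee $j_{\max}\geq 1$, and the parity asymmetry in the theorem's hypotheses enters at exactly this point.
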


\begin{figure}
\centering
\includegraphics[width=0.3\textwidth]{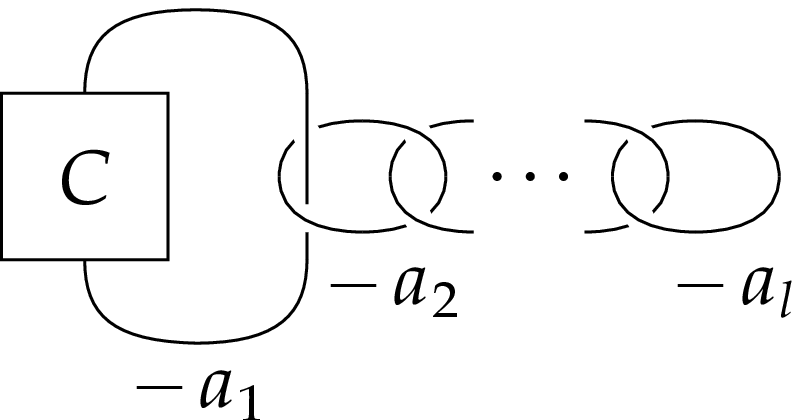}
\caption{A Kirby diagram for $Y = S^3_{-p/q}(C)$, where $p/q = [a_1,\dots,a_\ell]$.}
\label{trace}
\end{figure}

\subsection*{Acknowledgements} The author would like to thank his supervisor, Dr. Dorothy Buck, as well as Drs. Joshua Greene and Brendan Owens for their considerable support (in particular, drawing his attention to \cite{NiWu}). He is supported by the Rector's Award, SOF stipend, and Roth Fellowship at Imperial College London.
\section{An application of knot Floer homology}

If $C \subset S^3$ is a knot, then recall that the associated knot Floer chain complex $CFK:=CFK^\infty(S^3,C)$ is the $\mathbb{Z}$-module generated by a set $X$ together with a filtration $\mathcal{I} : X \rightarrow \mathbb{Z}\oplus\mathbb{Z}$ satisfying the properties
\begin{enumerate}
\item $\mathcal{I}(U\cdot x) = (i-1,j-1)$ if $\mathcal{I}(x) = (i,j)$; and
\item $\mathcal{I}(y) \leq \mathcal{I}(x)$ for all $y$ with non-zero coefficient in $\partial x$.
\end{enumerate}
Let $S$ be a subset of $\mathbb{Z}\oplus\mathbb{Z}$ such that $(i,j)\in S$ implies $(i+1,j),(i,j+1) \in S$, and define $CFK\{S\}$ to be the quotient of the knot Floer complex by the submodule generated by those $x \in X$ with $\mathcal{I}(x) \in S$. In this notation, we let
$$A^+_k := CFK\{i \geq 0 \text{ or } j \geq k\} \qquad \text{ and } \qquad B^+_k := CFK\{i \geq 0\},$$
where $k \in \mathbb{Z}$. As per \cite{OSRat}, these complexes come equipped with canonical $U$-equivariant chain maps
$$v^+_k,h^+_k : A^+_k \longrightarrow B^+_k$$
such that $v^+_k$ is projection onto $CFK\{i\geq 0\}$ and $h^+_k$ is a composition of projection onto $CFK\{j\geq k\}$, identification with $CFK\{j \geq 0\}$, and chain homotopy equivalence with $CFK\{i \geq 0\}$. At sufficiently high gradings, these maps are isomorphisms and hence behave as multiplication by $U^{V_k}$ and $U^{H_k}$ respectively where $V_k,H_k \geq 0$ are integers. The following lemma is taken from \cite{NiWu}.

\begin{lem}
\label{rel}
The $V_i$ and $H_i$ satisfy the following properties:
\begin{enumerate}
\item $V_0 = H_0$, and all $V_i,H_i \geq 0$;
\item The $V_i$ are a non-increasing sequence, while the $H_i$ are a non-decreasing sequence.
\end{enumerate}
\end{lem}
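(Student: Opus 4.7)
The plan is to derive both parts from elementary features of the $U$-module structure of the knot Floer complex, with the conjugation symmetry of $CFK^\infty$ providing the one genuinely geometric input, required only for the equality $V_0 = H_0$.

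The non-negativity assertion in (1) I would regard as essentially built into the construction: at sufficiently high grading both $A^+_k$ and $B^+_k$ reduce to shifted copies of the standard tower $\mathcal{T}^+$, and the maps $v^+_k, h^+_k$ are assembled from quotient maps, grading-preserving equivalences, and (in the horizontal case) a $U^k$-twist identification, all of which force the effective $U$-exponent on the tower at high grading to be non-negative. For $V_0 = H_0$, I would invoke the Ozsv\'ath-Szab\'o conjugation symmetry $\tau$ of $CFK^\infty$, the chain homotopy equivalence exchanging the two filtration coordinates. Since the region $\{i \geq 0 \text{ or } j \geq 0\}$ is $\tau$-invariant, $\tau$ induces a self-equivalence of $A^+_0$ carrying $CFK\{i \geq 0\}$ onto $CFK\{j \geq 0\}$. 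As $h^+_0$ is, by definition, the projection onto $CFK\{j \geq 0\}$ composed with the $\tau$-induced equivalence to $CFK\{i \geq 0\}$ (the shift identification being trivial at $k = 0$), it is chain homotopic to the $\tau$-conjugate of $v^+_0$. The two maps therefore act as the same power of $U$ at high grading, giving $V_0 = H_0$.

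For the monotonicity in (2) I would compare the towers of $A^+_k$ for consecutive values of $k$. The filtration region $\{i \geq 0 \text{ or } j \geq k+1\}$ is obtained from $\{i \geq 0 \text{ or } j \geq k\}$ by removing the ray $\{i < 0, j = k\}$, producing a canonical surjective chain map $\pi_k : A^+_k \twoheadrightarrow A^+_{k+1}$. Since both $v^+_k$ and $v^+_{k+1}$ are projection onto $CFK\{i \geq 0\}$, the identity $v^+_{k+1} \circ \pi_k = v^+_k$ holds, and at high grading this descends to a relation of $U$-powers between towers. Surjectivity of $\pi_k$ forces the tower bottom of $A^+_{k+1}$ to sit at the same grading as, or above, that of $A^+_k$, whence $V_{k+1} \leq V_k$. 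The monotonicity of $H_k$ is proved in parallel by replacing the vertical projections with the horizontal maps; the inequality reverses because the identification $CFK\{j \geq k\} \cong CFK\{j \geq 0\}$ contributes a $k$-dependent grading shift that enters into the final $U$-exponent with the opposite sign.

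The main obstacle I anticipate is this last grading computation in the horizontal case: one has to verify carefully how the shift identification $CFK\{j \geq k\} \cong CFK\{j \geq 0\}$ combines with both the projection step and the $\tau$-induced equivalence, so that the resulting $U$-exponents reassemble in the advertised direction. Once these bookkeeping details are pinned down, the remaining arguments amount to routine chasing of $U$-equivariant maps between shifted copies of $\mathcal{T}^+$.
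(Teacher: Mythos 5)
The paper does not actually prove this lemma; it states it and cites \cite{NiWu}, so there is no proof in the paper to compare against. Assessing your sketch on its own terms: the non-negativity of $V_k, H_k$ is built into the definition (the maps are $U$-equivariant quotient-type maps between complexes which are large-surgery models, so the high-grading $U$-exponent is automatically $\geq 0$), and your argument for $V_0 = H_0$ via the $\tau$-symmetry exchanging $i$ and $j$ is exactly the standard one and is fine.

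For the monotonicity of $V_k$, you have the right maps but the wrong justification. The claim ``surjectivity of $\pi_k$ forces the tower bottom of $A^+_{k+1}$ to sit at the same grading as, or above, that of $A^+_k$'' does not hold as stated: a surjective chain map need not be surjective on homology, and nothing rules out, a priori, that the bottom of the tower of $H_*(A^+_{k+1})$ is hit by the connecting map from the kernel rather than by $\pi_{k*}$. What you actually need is that the kernel $C\{i<0,\,j=k\}$ of $\pi_k$ is finitely generated over $\mathbb{Z}$ (a horizontal line meets each $U$-orbit at most once), hence supported in a bounded range of Maslov gradings. Consequently $\pi_{k*}$ is an isomorphism in high gradings; being $U$-equivariant, it carries the infinitely $U$-divisible tower of $H_*(A^+_k)$ into the tower of $H_*(A^+_{k+1})$, and being grading-preserving and nonzero, it must act there as multiplication by $U^{V_k - V_{k+1}}$ with a non-negative exponent. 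That rerouting closes the gap; the factorization $v^+_k = v^+_{k+1}\circ\pi_k$ you invoke is consistent with this but is not by itself what forces the exponent to be non-negative.

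For the monotonicity of $H_k$ you explicitly flag the grading bookkeeping as the unresolved obstacle and do not carry it out, which is a genuine gap in the write-up. The cleaner route is already in your hands: the conjugation symmetry $\tau$ you used for $V_0 = H_0$ in fact identifies $A^+_k$ with $A^+_{-k}$ (after the $U^k$-shift) and interchanges the vertical and horizontal maps, giving $V_k = H_{-k}$ for \emph{all} $k$, not just $k=0$. Once you have that, the non-decreasing property of $H_k$ is an immediate formal consequence of the non-increasing property of $V_k$, and no separate horizontal grading computation is needed.
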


Using the labelling of $\Spinc$-structures given in Section 7 of \cite{OSRat}, Ni and Wu proved the following proposition about $D^{p/q}_C(\mathfrak{t})$. It can be found in \cite{NiWu} as Proposition 2.11, though as stated here we have applied it to $\overline{C}$.

\begin{prop}[Ni-Wu]
\label{NW}
Let $C$ be any knot in $S^3$, and let $p,q > 0$ be coprime. Then
$$D^{p/q}_C (\mathfrak{t}_i) = 2\max \left\{V_{\floor{\tfrac{i}{q}}},H_{\floor{\tfrac{i-p}{q}}}\right\}.$$
\end{prop}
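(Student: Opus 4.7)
My plan is to invoke the Ozsv\'ath--Szab\'o mapping cone formula for rational surgery, which presents $HF^+(S^3_{-p/q}(C),\mathfrak{t})$ as the homology of a mapping cone whose terms are direct sums of $A^+_k$ and $B^+_k$ complexes, with connecting maps built from $v^+_k$ and $h^+_k$. The strategy is then to locate the unique $\mathcal{T}^+$-tower in this homology and read off its bottom grading, which differs from the corresponding grading in the $C = U$ case by a multiple of $U^{V_k}$ or $U^{H_k}$.

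First I would specialise the cone formula to the Spin$^c$-structure $\mathfrak{t}_i$ using the labelling of Section~7 of \cite{OSRat}. A short calculation with that labelling shows that, over $\mathfrak{t}_i$, the relevant copies of $A^+_k$ correspond to indices $k$ lying in an arithmetic progression of common difference $q$ inside $\mathbb{Z}$, and similarly for $B^+_k$; the two distinguished indices that end up controlling the non-torsion summand are $\lfloor i/q\rfloor$ (on the $v^+$ side) and $\lfloor (i-p)/q\rfloor$ (on the $h^+$ side). I would use Lemma~\ref{rel} together with the monotonicity of the $V_k$ and $H_k$ to argue that, among all $k$ appearing in the given Spin$^c$-structure, these two are the ones whose target tower in $B^+_k$ is least compressed by the connecting maps.

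Next I would compute the $d$-invariant of $S^3_{-p/q}(C)$ in the Spin$^c$-structure $\mathfrak{t}_i$ as the minimal grading of a class surviving into $HF^\infty$. Since $v^+_k$ (respectively $h^+_k$) acts as $U^{V_k}$ (respectively $U^{H_k}$) on the towers in sufficiently high grading, the bottom of the tower in the cone is shifted downwards by $2V_{\lfloor i/q\rfloor}$ if the surviving class lies in the $v^+$-image and by $2H_{\lfloor (i-p)/q\rfloor}$ if it lies in the $h^+$-image. The tower dies only if it is killed from both sides simultaneously, so the actual bottom grading is shifted by precisely $-2\max\{V_{\lfloor i/q\rfloor},H_{\lfloor (i-p)/q\rfloor}\}$ from the ``naive'' position it would occupy were all $V_k$ and $H_k$ zero. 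Applying the same formula to the unknot, for which $V_k=H_k=0$ for every $k\geq 0$, yields $d(S^3_{-p/q}(U),\mathfrak{t}_i)$ at the unshifted position, and subtracting gives the stated expression for $D^{p/q}_C(\mathfrak{t}_i)$.

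The main obstacle will be bookkeeping: the formula as stated has already been dualised from Ni--Wu's original version by passing to the mirror $\overline{C}$, which swaps the roles of $V_k$ and $H_k$ and converts $+p/q$-surgery into $-p/q$-surgery; so one must verify that the Spin$^c$-identifications of \cite{OSRat,NiWu} and the floor-function indices $\lfloor i/q\rfloor$ and $\lfloor(i-p)/q\rfloor$ remain consistent with the sign convention $D^{p/q}_C := d(S^3_{-p/q}(C),\mathfrak{t})-d(S^3_{-p/q}(U),\mathfrak{t})$ used here. Once these conventions are pinned down, the rest is a direct unwinding of the mapping cone.
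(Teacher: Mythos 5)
The paper does not prove this proposition: it is imported verbatim (after passing to the mirror $\overline{C}$) from Ni--Wu, where it appears as Proposition~2.11, and the text says so explicitly. There is therefore no internal proof against which to compare your sketch. That said, your outline is a reasonable reconstruction of the argument Ni and Wu actually give via the Ozsv\'ath--Szab\'o rational surgery mapping cone, and the structural points --- locate the surviving $\mathcal{T}^+$-tower, read its bottom grading off the $V_k$ and $H_k$ via the large-$k$ behaviour of $v^+_k$ and $h^+_k$, then subtract the unknot computation where all $V_k = H_k = 0$ --- are the right ones.

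One factual slip worth correcting: in the truncated mapping cone $\mathbb{X}^+(p/q)$ restricted to $\mathfrak{t}_i$, the summands $A^+_k$ that appear have $k = \lfloor (i+ps)/q \rfloor$ for $s \in \mathbb{Z}$, so consecutive indices differ by $\lfloor p/q\rfloor$ or $\lceil p/q\rceil$; they do not form an arithmetic progression of common difference $q$ as you assert. The two indices $\lfloor i/q\rfloor$ and $\lfloor (i-p)/q\rfloor$ that you correctly isolate are the values at $s=0$ and $s=-1$, i.e.\ the two copies of $A^+$ flanking a single fixed $B^+$ in the cone (one mapping in by $v^+$, the other by $h^+$). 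The heuristic that this pair is ``least compressed'' also needs to be replaced by the precise monotonicity argument (the non-increasing $V_\cdot$ and non-decreasing $H_\cdot$ from Lemma~\ref{rel} force the minimum of $\max\{V_{\lfloor (i+ps)/q\rfloor}, H_{\lfloor (i+p(s-1))/q\rfloor}\}$ over $s$ to occur at $s=0$), but the underlying idea is sound. Since the paper itself only cites the result, these are gaps in a reconstruction of the Ni--Wu argument rather than divergences from this paper.
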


As it stands, the labelling $\mathfrak{t}_i$ used above is a difficult one to manipulate with general rational surgeries, but simplifies considerably in the case of an integral $n$-surgery. In this instance, the $\Spinc$-structure $\mathfrak{t}_i$ is the one that admits an extension $\mathfrak{s}$ over the cobordism $S^3 \rightarrow S^3_{n}(C)$ which satisfies
$$\left<c_1(\mathfrak{s}),[F]\right> \equiv n + 2i \mod 2n,$$
where $F$ is a Seifert surface glued to the core of the attached handle. This fact will be useful to us later in our proofs.

We can now give a proof of the following result.

\begin{lem}
\label{eq}
Let $C$ be a knot in $S^3$. Then
\begin{align*}
\sum_{\mathfrak{t}\in\Spinc(S^3_{-p/q}(C))} D^{p/q}_C(\mathfrak{t}) = q\cdot&\sum_{\mathfrak{t}\in \Spinc(S^3_{-n}(C))} D^n_C (\mathfrak{t}) - r\cdot \min_{\mathfrak{t} \in \Spinc(S^3_{-n}(C))} \left\{ D^n_C(\mathfrak{t}) \right\}.
\end{align*}
\end{lem}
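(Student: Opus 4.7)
The plan is to apply Proposition~\ref{NW} to both sides and reduce the claim to a combinatorial identity on the $V_k$ and $H_k$ sequences.

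Writing $p = nq - r$ with $0 < r < q$, I would parameterise each $i \in \{0,1,\dots,p-1\}$ uniquely as $i = qa + b$ with $0 \le b < q$. Then $\lfloor i/q\rfloor = a$, while $i - p = q(a-n) + (b+r)$ together with $0 < r \le b+r < q+r$ gives
$$\lfloor (i-p)/q \rfloor = \begin{cases} a-n & \text{if } b < q - r, \\ a-n+1 & \text{if } b \ge q - r. \end{cases}$$
As $i$ runs over $\{0,\dots,p-1\}$, the pair $(a,b)$ exhausts $\{0,\dots,n-2\}\times\{0,\dots,q-1\}$ together with $\{n-1\}\times\{0,\dots,q-r-1\}$. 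Feeding this decomposition into Proposition~\ref{NW} yields
$$\sum_{\mathfrak{t}} D^{p/q}(\mathfrak{t}) = (q-r)\,S_1 + r\,S_2,$$
where $S_1 := \sum_{a=0}^{n-1} 2\max\{V_a, H_{a-n}\}$ and $S_2 := \sum_{a=0}^{n-2} 2\max\{V_a, H_{a+1-n}\}$. Applying Proposition~\ref{NW} once more, now to the integral $-n$-surgery (i.e.\ with $q=1$), identifies $S_1$ with $\sum_{\mathfrak{t}} D^n(\mathfrak{t})$.

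Since $(q-r)S_1 + r S_2 = q\,S_1 - r(S_1 - S_2)$, the lemma reduces to the single auxiliary identity
$$S_1 - S_2 = \min_{\mathfrak{t}\in\Spinc(S^3_{-n}(C))} D^n(\mathfrak{t}).$$
For this, I would shift the summation index in $S_2$ by one to write
$$S_1 - S_2 = 2V_0 + 2\sum_{a=1}^{n-1}\bigl[\max\{V_a, H_{a-n}\} - \max\{V_{a-1}, H_{a-n}\}\bigr],$$
and then invoke Lemma~\ref{rel}: setting $\phi(a) := \max\{V_a, H_{a-n}\}$, the monotonicity of $\{V_k\}$ and $\{H_k\}$ forces $\phi$ to be non-increasing on an initial segment and non-decreasing thereafter, with transition index $a^* := \min\{a : V_a \le H_{a-n}\}$. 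A short case analysis of each bracketed difference (three cases, depending on how $V_{a-1}, V_a$ compare with $H_{a-n}$) makes the sum telescope to $\min_a \phi(a) - V_0$, so that $S_1 - S_2 = 2\min_a \phi(a) = \min_{\mathfrak{t}} D^n(\mathfrak{t})$.

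The principal obstacle is this last case analysis, particularly the behaviour at $a = a^*$: whether $V_{a^*-1}$ exceeds or falls short of $H_{a^*-n}$ determines whether $\min\phi$ is attained at $a^*-1$ or at $a^*$, and the telescoping picks up the correct value in each sub-case. Everything else is routine bookkeeping once Proposition~\ref{NW} is invoked.
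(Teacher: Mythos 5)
Your proposal is correct and takes essentially the same route as the paper: apply Proposition~\ref{NW} to both surgeries, carry out the index bookkeeping to reach $(q-r)S_1 + rS_2$, and then use Lemma~\ref{rel} to establish $S_1 - S_2 = \min_{\mathfrak{t}}D^n(\mathfrak{t})$. The only cosmetic difference is in the last step: the paper fixes $i'$ to be the largest index achieving the minimum and splits on whether $V_{i'} \geq H_{i'-n}$, whereas you telescope the index-shifted sum with a local case analysis at the transition index $a^*$ (and you should note, as you implicitly do, that $\max\{V_0,H_{-n\}} = V_0$ by Lemma~\ref{rel}); both rest on the same monotonicity facts and are structurally equivalent.
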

\begin{proof}
We consider the integral surgery first. By a direct application of Proposition \ref{NW} we obtain
\begin{equation}
\label{integral}
\sum_{\mathfrak{t}\in\Spinc(S^3_{-n}(C))} D^n_C(\mathfrak{t}) = 2\sum_{i=0}^{n-1} \max\left\{V_i,H_{i-n}\right\}.
\end{equation}
Our goal is to compare this with the rational surgery.

Labelling the $\Spinc$-structures on the rational surgery as $\mathfrak{t}_{iq+j}$, we have the following bounds:
\begin{enumerate}
\item $j$ ranges from $0$ to $q-1$;
\item $i$ ranges from $0$ to $n-1$ if $j < q-r$, or from $0$ to $n-2$ if $j \geq q-r$.
\end{enumerate}
Rephrasing the second of these, $i$ ranges from $0$ to $n-1-\delta(j)$, where $\delta(j) := \floor{\tfrac{j+r}{q}}$. Consequently, using Proposition \ref{NW},
\begin{equation}
\label{spincs}
\sum_{j=0}^{q-1}\sum_{i=0}^{n-1-\delta(j)} D^{p/q}_C(\mathfrak{t}_{iq+j}) = 2\sum_{j=0}^{q-1}\sum_{i=0}^{n-1-\delta(j)}\max\{V_i,H_{i-n+\delta(j)}\}.
\end{equation}
We fix $j$ and observe that
\begin{equation}
\label{combine}
\sum_{i=0}^{n-1-\delta(j)}\max\{V_i,H_{i-n+ \delta(j)}\} = \begin{cases}
\sum_{i=0}^{n-1}\max\{V_i,H_{i-n}\} & \text{if }\delta(j)=0\\
\sum_{i=0}^{n-2}\max\{V_i,H_{i-n+1}\} & \text{if }\delta(j)=1\end{cases}.
\end{equation}
Clearly, if $\delta(j) = 0$, then the RHS is the same as the RHS of \eqref{integral}. This happens for the first $q-r$ values of $j$, meaning that the situations of interest are the $r$ larger cases when $\delta(j) = 1$. In effect, in order to obtain our result we need to establish that
$$\sum_{i=0}^{n-2}\max\{V_i,H_{i-n+1}\} = \sum_{i=0}^{n-1}\max\{V_i,H_{i-n}\} - m,$$
where $2m$ is the value of the minimum deficiency.

Suppose that $i^\prime$ is chosen to be the largest integer such that the integral deficiency in $\Spinc$-structure $\mathfrak{t}_{i^\prime}$ is minimal. That is, that $\max \{V_{i^\prime},H_{i^\prime-n}\}$ is minimal. Then there are two possibilities.

\begin{enumerate}
\item Suppose that $V_{i^\prime} \geq H_{i^\prime-n}$. As $V_*$ is non-increasing and $H_*$ is non-decreasing, it follows that $V_i \geq H_{i-n}$ for all $i \leq i^\prime$, and hence that $V_i \geq V_{i+1} \geq H_{i-n+1}$ for all $i < i^\prime$.

\qquad Going in the other direction, suppose that $V_{i^\prime + 1} > H_{i^\prime-n+1}$. Then our choice of $i^\prime$ implies that $V_{i^\prime + 1} = \max\{V_{i^\prime+1},H_{i^\prime -n + 1}\} > \max\{V_{i^\prime},H_{i^\prime - n}\} =  V_{i^\prime}$, a contradiction to the non-increasing behaviour of $V_*$. Thus $H_{i^\prime - n + 1} \geq V_{i^\prime + 1}$, and $H_{i-n+1} \geq H_{i-n} \geq V_i$ for all $i > i^\prime$.

\qquad In the case $i = i^\prime$, observe that
$$V_{i^\prime} = \max\{V_{i^\prime},H_{i^\prime-n}\} < \max\{V_{i^\prime+1},H_{i^\prime -n + 1}\} = H_{i^\prime - n +1}.$$
Putting this together with the conclusions of the previous two paragraphs, we deduce that
\begin{align*}
\sum_{i=0}^{n-2}\max\{V_i,H_{i-n+1}\} &= \sum_{i=0}^{i^\prime - 1}V_i + \sum_{i=i^\prime}^{n-2} H_{i-n+1} \\
&= \sum_{i=0}^{n-1}\max\{V_i,H_{i-n}\} - V_{i^\prime}.
\end{align*}
Observe that $V_{i^\prime} = m$.
\item Suppose instead that $H_{i^\prime-n}\geq V_{i^\prime}$. This case is similar in nature, though a little more complicated: we define $j^\prime$ to be the smallest integer such that $\max\{V_k,H_{k - n}\}$ is minimal for $j^\prime \leq k \leq i^\prime$, and end with the conclusion
\begin{align*}
\sum_{i=0}^{n-2}\max\{V_i,H_{i-n+1}\} &= \sum_{i=0}^{j^\prime-1} V_i + \sum_{i=j^\prime}^{i^\prime-1} H_{i^\prime - n} + \sum_{i=i^\prime}^{n-2} H_{i-n+1}\\
&=\sum_{i=0}^{n-1}\max\{V_i,H_{i-n}\} - H_{i^\prime-n}
\end{align*}
and the observation that $H_{i^\prime - n} = m$, by definition of $i^\prime$.
\end{enumerate}
To complete the proof, one puts the above information into \eqref{spincs} via \eqref{combine} and compares with \eqref{integral}.
\end{proof}

If one reads this argument carefully, one will find that it can be modified slightly to give a proof of Theorem \ref{main}. However, since this modified argument provides no insight as to the nature of $\mathfrak{r}$ without a deeper knowledge of the labelling $\mathfrak{t}_i$, it is of limited use to us.

In light of the above lemma, a natural question at this point is: Which elements of $\Spinc(S^3_{-n}(C))$ minimise $D^n$? The answer is below.

\begin{lem}
\label{minimising}
According to the parity of $n$,
\begin{enumerate}
\item If $n$ is even, then $\mathfrak{t}_{\tfrac{n}{2}}$ realises the minimal deficiency; and
\item If instead $n$ is odd, then $\mathfrak{t}_{\tfrac{n\pm1}{2}}$ do the same.
\end{enumerate}
\end{lem}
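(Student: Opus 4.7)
The plan is to combine the Ni--Wu formula of Proposition~\ref{NW} with the monotonicity in Lemma~\ref{rel} and the standard conjugation symmetry $H_k = V_{-k}$ of the knot Floer complex of a knot in $S^3$; the latter is a consequence of the $(i,j)\leftrightarrow(j,i)$ involution on $CFK^\infty(S^3,C)$ and is used implicitly throughout \cite{NiWu}.

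First I would specialise Proposition~\ref{NW} to the integral case $q=1$, $p=n$, with $\mathfrak{t}_i$ ranging over $i\in\{0,1,\dots,n-1\}$ as in the proof of Lemma~\ref{eq}. Applying $H_{i-n} = V_{n-i}$ then rewrites the formula as
$$D^n_C(\mathfrak{t}_i)\;=\;2\max\{V_i,\,V_{n-i}\}.$$
The question is thereby reduced to minimising $\max\{V_i,V_{n-i}\}$ over $i\in\{0,\dots,n-1\}$, in which the sequence $V_\ast$ is non-increasing by Lemma~\ref{rel}.

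The remainder is pure monotonicity bookkeeping. I would establish $\max\{V_i,V_{n-i}\}\geq V_{\lfloor n/2\rfloor}$ for every such $i$: if $i\leq\lfloor n/2\rfloor$ then $V_i\geq V_{\lfloor n/2\rfloor}$, while if $i\geq\lceil n/2\rceil$ then $V_{n-i}\geq V_{\lfloor n/2\rfloor}$, and these two ranges exhaust $\{0,\dots,n-1\}$ in either parity. Evaluating at the claimed indices: when $n$ is even, $i=n/2$ yields $\max\{V_{n/2},V_{n/2}\}=V_{n/2}$; when $n$ is odd, both $i=(n-1)/2$ and $i=(n+1)/2$ yield $\max\{V_{(n-1)/2},V_{(n+1)/2}\}=V_{(n-1)/2}$. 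In each case the value matches $V_{\lfloor n/2\rfloor}$ and so attains the lower bound.

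I do not anticipate a substantive obstacle. The only non-elementary ingredient is the identity $H_k=V_{-k}$, which is classical; everything else is a short case split powered by the non-increasing behaviour of $V_\ast$.
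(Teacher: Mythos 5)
Your proof is correct, and it follows a genuinely different route from the paper's. The paper works entirely with the ingredients of Lemma~\ref{rel} ($V_0 = H_0$ and monotonicity): it observes that $\mathfrak{t}_i$ and $\mathfrak{t}_{n-i}$ are conjugate $\Spinc$-structures, invokes the conjugation-invariance of the correction terms to propagate minimality from any $i \neq 0$ to the full range $i \leq j \leq n-i$ by monotonicity, and then must separately rule out the possibility that $\mathfrak{t}_0$ alone achieves the minimum, which it does by showing $\mathfrak{t}_0$ in fact realises the \emph{maximum}. You instead invoke the stronger chain-level symmetry $H_k = V_{-k}$, which is not contained in Lemma~\ref{rel} but is classical (it comes from the $(i,j) \leftrightarrow (j,i)$ involution on $CFK^\infty(S^3,C)$ and is consistent with, though strictly finer than, the properties listed there). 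This reduces $D^n_C(\mathfrak{t}_i)$ to $2\max\{V_i, V_{n-i}\}$, after which the minimisation is a one-step monotonicity argument on a single sequence with no case split on $\mathfrak{t}_0$. The tradeoffs: your version is shorter and avoids the awkward $\mathfrak{t}_0$ analysis, but imports a symmetry fact the paper never states, whereas the paper's version is self-contained modulo the conjugation-invariance of $d$, which is already cited as a standard property of correction terms. Both are valid; if spliced into this paper, your version should explicitly cite the source of $H_k = V_{-k}$ (e.g.\ \cite{NiWu} or \cite{OSRat}) since Lemma~\ref{rel} alone does not supply it.
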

\begin{proof}
Recall that $\mathfrak{t}_i$ evaluates, modulo $2n$ to $n + 2i$. Hence $\mathfrak{t}_i$ and $\mathfrak{t}_{n-i}$ are conjugates, and so, by the conjugation symmetry of correction terms, if $i \neq 0$ and $\mathfrak{t}_i$ realises the minimum, so does $\mathfrak{t}_{n-i}$. Assuming that $i \leq n-i$, we claim that the same is true for all $\mathfrak{t}_j$ with $i \leq j \leq n-i$. Indeed, let the minimum deficiency be $2m$, so that $\max\{V_i,H_{i-n}\} = \max\{V_{n-i},H_{-i}\} = m$. We know that $m \geq V_i \geq V_j$ and $m \geq H_{-i} \geq H_{j-n}$, so $\max \{V_j,H_{j-n}\} \leq m$, and as $m$ is minimal it follows that we have equality. Consequently, $\mathfrak{t}_j$ also realises the minimum.

Thus, if $\mathfrak{t}_i$ realises the minimum for $i \neq 0$, so do the $\mathfrak{t}_j$ for the centralmost values of $j$, namely $\tfrac{n}{2}$ or $\tfrac{n\pm1}{2}$, depending on parity. The other possibility, of course, is that $\mathfrak{t}_0$ realises the minimum. In this case, observe that $D^n_C(\mathfrak{t}_0) = 2\max\{V_0,H_{-n}\}$. By Lemma \ref{rel}, $V_0 = H_0 \geq H_{-n}$, and we see that the deficiency is $2V_0$. Since $V_i \leq V_0$ and $H_{i-n} \leq H_0 = V_0$,
$$D^n_C(\mathfrak{t}_i) = 2\max \{V_i,H_{i-n}\} \leq 2V_0 = D^n_C(\mathfrak{t}_0),$$
and $\mathfrak{t}_0$ is in fact the $\Spinc$-structure with the maximal deficiency.
\end{proof}
\section{Preliminaries to the proofs}

Our goal now is to exhibit a function $\mathfrak{r} : \Spinc(S^3_{-p/q}(C)) \rightarrow \Spinc(S^3_{-n}(C))$ that satisfies Theorem \ref{main}. This will we require some enumeration of the $\Spinc$-structures on $S^3_{-p/q}(C)$, but because of the bijection
$$\Spinc(S^3_{-p/q}(C) \longleftrightarrow \Spinc(S^3_{-p/q}(U)$$
described in Section \ref{introduction}, this enumeration need only consider the case $C = U$.

\subsection{A Plumbing Diagram for $S^3_{-p/q}(U)$}

Consider the linear graph $G$ in Figure \ref{linear}, where $p/q = [a_1,\dots,a_\ell]$ is written in Hirzebruch-Jung continued fraction notation and $a_i \geq 2$ for $i \geq 2$ (recall that such an expansion always exists). Then $G$ determines a sharp, simply connected, negative-definite smooth 4-manifold $W^\prime$ with free $H_2(W^\prime)$ by plumbing (see \cite{OSPlumbed}). Moreover, $\partial W^\prime = S^3_{-p/q}(U)$.

\begin{figure}
\centering
\includegraphics[width=0.3\textwidth]{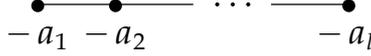}
\caption{A weighted graph $G$ with weighted adjacency matrix $Q$. Vertices are labelled $v_1$ to $v_\ell$ from left to right, and have weights $w(v)$ where $v \in V(G)$, the vertex set. The integers $a_i$ are taken from the Hirzebruch-Jung continued fraction expansion $p/q = [a_1,\dots,a_\ell]$ where $a_i \geq 2$ for $i \geq 2$. Notice that $G$ determines a 4-manifold $W^\prime$ by plumbing, and that $\partial W^\prime = S^3_{-p/q}(U)$.}
\label{linear}
\end{figure}

Following \cite{OSPlumbed}, it is possible to use this diagram to enumerate $\Spinc(S^3_{-p/q}(U))$ and compute the corresponding correction terms. Indeed, we observe that $H_1(W^\prime)$ is generated by $[v]$, where $v$ is a vertex of $G$, and $H^2(W^\prime,\partial W^\prime)$ by $\PD[v]$. Pushing this through the short exact sequence
$$\begin{CD} 0 @>>> H^2(W^\prime,\partial W^\prime) @>{Q}>> H^2(W^\prime) @>{\alpha}>> H^2(\partial W^\prime) @>>> 0,\end{CD}$$
we see that $\ker \alpha$ is generated by the images of the $\PD[v]$ in $H^2(W^\prime)$ (i.e. by the rows of $Q$). We will think of $H^2(W^\prime)$ as $\Hom(H_2(W^\prime),\mathbb{Z})$, since $W^\prime$ is simply connected.

Now, consider all the characteristic covectors $\Char(G) \subset H^2(W^\prime)$. That is, those $K$ which satisfy
$$\left< K, [v] \right> \equiv w(v) \mod 2 \text{ for all } v \in V(G).$$
Since $c_1 : \Spinc(W^\prime) \rightarrow H^2(W^\prime)$ is injective with image $\Char(G)$, we shall think of these covectors as the $\Spinc$-structures on $W^\prime$. If $K \in \Char(G)$, so $K = c_1(\mathfrak{s})$ for some $\mathfrak{s} \in \Spinc(W^\prime)$, then we shall write $[K]$ for the $\Spinc$-structure $\mathfrak{s}\vert_{\partial W^\prime}$, and thus partition $\Char(G)$ into equivalence classes using the relation $K \sim K^\prime$ if $[K] \sim [K^\prime]$ (where $K,K^\prime \in \Char(G)$). Because $W^\prime$ is sharp (see \cite{OSPlumbed}), every $\mathfrak{t} \in \Spinc(\partial W^\prime)$ lifts to some $\mathfrak{s} \in \Spinc(W^\prime)$, and hence we can think of any complete set of representatives of equivalence classes of $\sim$ as being the $\Spinc$-structures on $\partial W^\prime$.

As we would ideally like to list such representatives, it is fortunate then that the results of \cite{OSPlumbed} tell us exactly how to do this. In that paper, Ozsv\'ath and Szab\'o prove that $\ker U \subset HF^+(\partial W^\prime)$ is given by some subset of those $K$ satisfying
\begin{equation}
w(v) + 2 \leq \left<K,v\right> \leq -w(v) \text{ for all } v \in V(G).
\label{plumb}
\end{equation}
To determine which such $K$, we start with some $K$ satisfying \eqref{plumb} and let $K_0 := K$. If $\left<K_i,v\right> = -w(v)$ for some $v$, we set $K_{i+1} := K_i + 2PD[v]$. Notice that $K^\prime \sim K$. This operation is called \emph{pushing down} (the co-ordinate of) $K_i$ at $v$. Continuing like this, we conclude either with some $L := K_m$ such that
$$w(v) \leq \left<L,v\right> \leq -w(v) -2 \text{ for all } v \in V(G),$$
or else with an $L$ such that there exists a $v$ satisfying $\left<L, v\right> \geq -w(v)$. If we conclude in the first way, we say that $K$ initiates a \emph{maximising} path. If we conclude in the second, we say that $K$ initiates a \emph{non-maximising} path. The relevant result from \cite{OSPlumbed} is that $\ker U$ is given by those $K$ that satisfy \eqref{plumb} and initiate maximising paths. As a corollary from the same paper, their correction terms are computed using the formula
\begin{equation}
\label{dformula}
d(\partial W^\prime, \mathfrak{t}) = \max_{K : [K] = \mathfrak{t}}\frac{KQ^{-1}K^t + \abs{G}}{4}.
\end{equation}
We remind the reader that as $\partial W^\prime$ an $L$-space, $\ker U$ gives us the complete collection of $\Spinc$-structures on $\partial W^\prime$ without repetition.

\subsection{Correction Term Calculus}

At this point, it is natural to ask what the $\Spinc$-structures on $S^3_{-p/q}(U)$ look like after applying the above algorithm. To answer this question, we require some definitions.

\begin{defn}
Suppose that $Y$ is a closed 3-manifold contained in $X$, a smooth 4-manifold. Then given some $\mathfrak{s} \in \Spinc(X)$, we say that $c_1(\mathfrak{s})$ is a \emph{maximiser} for $\mathfrak{t} = \mathfrak{s}\vert_Y$ if $c_1(\mathfrak{s})^2$ is maximal among $c_1(\mathfrak{s}^\prime)^2$, where $\mathfrak{s}^\prime \in \Spinc(X)$ satisfies $\mathfrak{s}^\prime\vert_Y = \mathfrak{t}$.
\end{defn}

\begin{defn}
Let $K$ be a characteristic covector for the linear graph $G$ in Figure \ref{linear} which satisfies
$$w(v) \leq \left<K,v\right> \leq -w(v) \text{ for all } v \in V(G).$$
Then we say that a vertex $v_i$ is a \emph{peak} for $K$ if $\left<K,[v_i]\right> = a_i$, and call $K + 2\PD[v_i]$ the \emph{push-down} of $K$ at $v_i$ (we also call any covector obtained by a sequence of such moves a \emph{push-down} of $K$). We say that $K$ contains no \emph{full tanks} if there do not exist $i < j$ such that $v_i$ and $v_j$ are peaks and $\left<K,[v_k]\right>= a_k - 2$ for all $i < k < j$. We say that $K$ is \emph{left-full} if there exists a peak $v_i$ such that $\left<K,[v_k]\right> = a_k-2$ for all $k < i$. To make notation simpler we will write $b_k := 2 - a_k$.
\end{defn}

\begin{lem}
The $\Spinc$-structures on $S^3_{-p/q}(U)$ are represented by those characteristic covectors $K$ satisfying \eqref{plumb} that contain no full tanks. We call the collection of such characteristic covectors $\mathcal{K}$.
\end{lem}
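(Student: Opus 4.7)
The plan is to identify $\mathcal{K}$ with the set of covectors satisfying \eqref{plumb} that initiate maximising paths under the push-down algorithm, so that by the Ozsv\'ath--Szab\'o characterisation recalled above, $\mathcal{K}$ is a complete collection of representatives for $\Spinc(\partial W^\prime)$. I would prove the two inclusions separately.

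For $\mathcal{K} \subseteq \{\text{maximising initiators}\}$, the key point is that push-downs preserve membership in $\mathcal{K}$ (with the algorithm-required mild widening to the range $[w(u), -w(u)]$). Given $K \in \mathcal{K}$ with peak $v_i$, the no-full-tanks hypothesis precludes adjacent peaks, so the neighbours $v_{i\pm 1}$ have values at most $a_{i\pm 1} - 2$ in $K$ and at most $a_{i\pm 1}$ in $K^\prime$; in particular $K^\prime$ lies in $[w(u), -w(u)]$ everywhere. A short case analysis rules out any candidate full tank $(v_a, v_b)$ of $K^\prime$: if $a < i < b$, then $\langle K^\prime, v_i\rangle = -a_i$ would have to equal $a_i - 2$, impossible for $a_i \geq 2$; if the tank avoids $\{v_{i-1}, v_i, v_{i+1}\}$, its values coincide with those of $K$ and it equals a full tank of $K$; and if $v_{i\pm 1}$ is an endpoint newly formed by the push-down, then inspecting the old values at $v_{i\pm 1}$ and $v_i$ reconstructs a full tank of $K$ with $v_i$ as endpoint. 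Termination follows from any monovariant that strictly decreases under push-downs --- for instance a suitable positive linear combination of the $K_i$, which exists because $Q$ is negative-definite --- and at termination no peaks remain, so the final covector lies in the maximising region $[w(u), -w(u) - 2]$.

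For the reverse inclusion, I would exhibit an explicit non-maximising push-down sequence from any $K$ containing a full tank. Let such a tank have outer peaks $v_i, v_j$ and intermediate values $a_k - 2$, and push down $v_j$: either (in the adjacent case $j = i + 1$) this immediately raises $\langle \cdot, v_i\rangle$ to $a_i + 2$, or it creates a new peak at $v_{j-1}$. Pushing each successive newly-formed peak cascades the peak inwards along the tank while leaving $v_i$ untouched at value $a_i$; once the cascade reaches $v_{i+1}$, the final push-down raises $\langle \cdot, v_i\rangle$ to $a_i + 2 > -w(v_i)$, landing the algorithm in a non-maximising state. Independence of the algorithm's eventual verdict from push-down order --- a standard fact following from uniqueness of the maximiser of $KQ^{-1}K^t$ in each equivalence class --- rules out any maximising sequence beginning from $K$. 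The main obstacle in this plan is the third case of the forward case analysis, where a new peak at $v_{i\pm 1}$ in $K^\prime$ must be traced back to a latent full tank of $K$ using $v_i$ as an endpoint rather than as an interior vertex.
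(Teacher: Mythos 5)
Your proposal is sound and covers the same two inclusions the paper proves, but the mechanism for the forward inclusion is genuinely different. The paper inducts on the number of peaks, handling three cases according to whether the neighbours of the pushed-down peak become new peaks, and continuing to push ``unilaterally'' until the process halts; in the case $a_i = 2$ the paper simply asserts that repeating the whole procedure ``eventually halts,'' which is where your monovariant earns its keep. Your scheme --- proving that a single push-down preserves the no-full-tanks property and the widened range $[w(v),-w(v)]$, then appealing to a strictly decreasing linear functional $K\mapsto K\cdot w$ for a positive $w$ with $Qw<0$ (which exists because $-Q$ is an irreducible M-matrix) --- gives a cleaner termination argument and decouples it from the combinatorics of which neighbours become peaks. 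Your third case is slightly hand-wavy as you acknowledge, but the trace-back is genuinely routine: if $K^\prime$ has a full tank with endpoint $v_{i+1}$ running to the right (or $v_{i-1}$ running to the left), restoring the pre-push value at $v_{i\pm1}$ and adjoining $v_i$ as an endpoint produces a full tank of $K$, since $\langle K,v_{i\pm1}\rangle = a_{i\pm1}-2$ and $\langle K,v_i\rangle = a_i$, while the other possibilities either leave the tank untouched or force $a_i=1$ at an interior vertex, which cannot happen for $i\geq 2$.

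For the reverse inclusion you cascade from $v_j$ inwards rather than the paper's cascade from $v_i$ outwards --- an immaterial reflection --- but you make one point explicit that the paper leaves silent: both proofs only exhibit one choice of non-maximising path, and one needs the fact that the maximising/non-maximising verdict is independent of the order of push-downs (as in Ozsv\'ath--Szab\'o's work on plumbed manifolds, or N\'emethi's reformulation). The paper's argument, read literally, shows only that a full-tank $K$ admits \emph{some} bad path, so this invariance is being used implicitly; you are right to flag it, and it is indeed standard. One small point worth tightening: in the termination step, you should note that the monovariant is bounded below precisely \emph{because} your preservation lemma keeps $K$ in the box $\prod_i[-a_i,a_i]$, so the two halves of the forward argument are not independent.
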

\begin{proof}
We first prove that if $K$ has a full tank then it initiates a non-maximising path. Indeed, observe the following path (presenting only the relevant section of $K$):
\begin{align*}
(a_i,-b_{i+1},-b_{i+2}, \dots, -b_{j-1},a_j) &\longrightarrow (-a_i,a_{i+1},-b_{i+2},\dots, -b_{j-1},a_j)\\
&\longrightarrow (b_i,-a_{i+1},a_{i+2},\dots, -b_{j-1},a_j)\\
&\longrightarrow (b_i,b_{i+1},-a_{i+2},\dots, -b_{j-1},a_j)\\
&\longrightarrow \dots \\
&\longrightarrow (b_i,b_{i+1},b_{i+2},\dots, a_{j-1},a_j)\\
&\longrightarrow (b_i,b_{i+1},b_{i+2},\dots, -a_{j-1},a_j+2).
\end{align*}
Here $\left<L,[v_j]\right> > -w(v_j)$, so the initiated path is non-maximising.

What remains to be shown is that if $K$ does not have a full tank, then it initiates a maximising path. We do this by inducting on the number peaks in $K$ and its push-downs. If there are none, we have a (trivial) maximising path. Thus, we presume there is at least one peak at $v_i$. Now, push down at $v_i$. Depending on whether $\left<K,[v_{i\pm1}]\right>=a_{i\pm1}-2$, there are three possibilities for the new $K^\prime = K + 2\PD[v_i]$:
\begin{enumerate}
\item $v_{i-1}$ and $v_{i+1}$ are not peaks of $K^\prime$. Then $K^\prime$ has one peak fewer than $K$ and also contains no full tanks as $\left<K^\prime,[v_i]\right> = -a_i \neq a_i - 2$ since $a_i \geq 2$ when $i \geq 2$. Hence, we apply the inductive hypothesis.
\item $v_{i-1}$ is not a peak, $v_{i+1}$ is (or the reverse situation). In this case, push down at $v_{i+1}$, and continue pushing down at any further peaks this generates, necessarily heading to the right. As $K$ had no full tanks, this process must stop without initiating a non-maximising path. As in the previous case, the resulting covector has one peak fewer than $K$ and no full tanks, so apply the induction hypothesis.
\item $v_{i-1}$ and $v_{i+1}$ are peaks. This situation is the same as the one above, pushing down in both directions unilaterally until the process halts. If $a_i = 2$, we will have to repeat this whole procedure multiple times, but eventually it will halt.
\end{enumerate}
In all situations we have a maximising path. This completes our proof.
\end{proof}

Since $S^3_{-p/q}(U)$ is an $L$-space, we have now isolated a collection $\mathcal{K} \subset \Char(G)$ in bijection with $\Spinc (S^3_{-p/q}(U))$. Hence, we have the following proposition (the last part of which is an application of \eqref{dformula}).

\begin{prop}
\label{sharpprop}
Given $\mathfrak{t} \in \Spinc(S^3_{-p/q}(U))$, there is a unique $K \in \mathcal{K}$ such that $[K] = \mathfrak{t}$ and $K$ is a maximiser for $[K]$. Moreover,
$$d(S^3_{-p/q}(U),\mathfrak{t}) = \frac{KQ^{-1}K^t + b_2(W^\prime)}{4}.$$
\end{prop}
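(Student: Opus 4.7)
The proposition makes three assertions: uniqueness of $K \in \mathcal{K}$ representing a given $\mathfrak{t}$; the fact that this $K$ is a maximiser for $[K]$; and a closed-form expression for $d(S^3_{-p/q}(U), \mathfrak{t})$. The first is immediate from the preceding lemma, which (combined with the $L$-space property already invoked in the text) establishes the bijection between $\mathcal{K}$ and $\Spinc(S^3_{-p/q}(U))$. The third will follow routinely from the second by applying \eqref{dformula} and noting that $b_2(W') = |G|$, since $H_2(W')$ is freely generated by the classes $[v]$ for $v \in V(G)$. The substance of the proof therefore lies in showing that the unique $K \in \mathcal{K}$ with $[K] = \mathfrak{t}$ is actually a maximiser.

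The key observation is that each push-down preserves the square $K^2 = KQ^{-1}K^t$. Indeed, using $\langle \PD[v], [u]\rangle = [v] \cdot [u] = Q_{uv}$, a direct expansion yields
\begin{equation*}
\left(K + 2\PD[v]\right)^2 = K^2 + 4\langle K, [v]\rangle + 4w(v),
\end{equation*}
which vanishes precisely when $\langle K, [v]\rangle = -w(v)$, i.e.\ exactly when $v$ is eligible to be pushed down. Since $K \in \mathcal{K}$ initiates a maximising path (by the preceding lemma) terminating at some $L$ lying in the reduced range $w(v) \leq \langle L, [v]\rangle \leq -w(v) - 2$ for every $v \in V(G)$, iterating the identity above gives $K^2 = L^2$.

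To conclude, such $L$ are precisely the short characteristic covectors that Ozsv\'ath--Szab\'o identify in \cite{OSPlumbed} as computing the correction terms of $\partial W'$; in particular each such $L$ attains the maximum in \eqref{dformula} for its Spin$^c$ class. Together with the push-down invariance above, this implies $K$ itself is a maximiser for $[K] = \mathfrak{t}$, and then \eqref{dformula} gives the stated formula with $|G|$ replaced by $b_2(W')$. The main obstacle I anticipate is pinpointing and quoting the precise statement from \cite{OSPlumbed} that short characteristic covectors maximise $K^2$ within their Spin$^c$ class, since that paper phrases its main computations using somewhat different combinatorial language; everything else is a short algebraic manipulation.
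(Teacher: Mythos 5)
Your overall decomposition is correct and matches what the paper does: the uniqueness is the content of the preceding lemma together with the $L$-space property, and the numerical formula follows from \eqref{dformula} once one knows the $K\in\mathcal{K}$ representing $\mathfrak{t}$ is a maximiser. In fact the paper gives no proof at all beyond the one-line remark before the proposition, so it is leaning directly on \cite{OSPlumbed} for the maximiser claim in exactly the way you anticipate. Your push-down identity
\[
\left(K+2\PD[v]\right)^2 = K^2 + 4\langle K,[v]\rangle + 4w(v)
\]
is correctly derived (and, worth noting, is precisely the unnamed lemma the paper proves later in Section 3.4, so you have independently rediscovered a fact the author found worth isolating). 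However, the push-down step is a detour that does not actually reduce your dependence on \cite{OSPlumbed}: after showing $K^2 = L^2$ you still need to cite Ozsv\'ath--Szab\'o to conclude that the terminal $L$ is a maximiser, and one could equally well cite them directly for $K$, since \eqref{plumb} and the maximising-path condition are phrased for $K$ rather than for the terminal vector $L$ (which lands in the shifted range $w(v)\leq\langle L,v\rangle\leq -w(v)-2$). So the extra work is harmless but buys nothing; the genuinely nontrivial fact --- that covectors in the range of \eqref{plumb} initiating maximising paths achieve the maximum in \eqref{dformula} --- is the same under either formulation, and you are right to flag that the precise statement in \cite{OSPlumbed} is nontrivial to extract. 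The paper leaves exactly the same citation implicit, so your proof is at the same level of rigour as the source; just be aware that the push-down argument is not what carries the load here.
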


\subsection{Comparing the Rational and Integral Surgeries}

We are now ready to make comparisons between the $-p/q$ and $-n$ surgeries on $C$ and $U$. In doing so, it is very important to keep track of which coefficients and which knots we are considering. Thus, we observe the following:
\begin{enumerate}
\item Let $W(C) : S^3 \rightarrow S^3_{-p/q}(C)$ be the cobordism determined by the diagram in Figure \ref{trace}, and let $W := W(C) \cup_{S^3} D^4$ and $W^\prime := W(U) \cup_{S^3} D^4$ (i.e. these manifolds are the traces of the surgeries in Figure \ref{trace}). Note that this $W^\prime$ and the $W^\prime$ of the previous section are identical;
\item The intersection form of the cobordism $W(C)$ is independent of $C$. Ergo, $W$ and $W^\prime$ have the same intersection form, represented in some bases by the adjacency matrix $Q$ of the graph $G$ in Figure \ref{linear};
\item Courtesy of this fact, a $K \in \Char(G)$ is a maximiser for $[K] \in \Spinc(S^3_{-p/q}(U))$ if and only if $K$ is also a maximiser for the corresponding $\Spinc$-structure on $S^3_{-p/q}(C)$, which we shall also denote by $[K]$. The crucial difference is that $K$ might not compute the correction term for $S^3_{-p/q}(C)$. Henceforth, we shall think of $\Char(G)$ as the $\Spinc$-structures on either 4-manifold $W$ or $W^\prime$, and any complete collection $\mathcal{F}$ of representatives of equivalence classes of $\sim$ as the $\Spinc$-structures on either 3-manifold $S^3_{-p/q}(C)$ or $S^3_{-p/q}(U)$;
\item $W(C)$ splits naturally into two cobordisms,
$$\begin{CD} S^3 @>{W_1(C)}>> S^3_{-n}(C) @>{W_2(C)}>> S^3_{-p/q}(C). \end{CD}$$
All three cobordisms are negative definite and have intersection forms independent of $C$.
\end{enumerate}

Now consider $W_2(C)$. Given a maximiser $K$ which determines $\mathfrak{t} \in \Spinc(S^3_{-p/q}(C))$, this $K$ also determines a $\mathfrak{t}^\prime \in \Spinc (S^3_{-n}(C))$ by considering the value of $k := \left<K,[v_1]\right>$ modulo $2n$. Comparing this with the labelling used in Proposition \ref{NW}, we see that $\mathfrak{t}^\prime$ corresponds with $\mathfrak{t}_{\tfrac{k + n}{2}} \in \Spinc(S^3_{-n}(C))$, unless $k = n$, in which case $\mathfrak{t}^\prime$ corresponds with $\mathfrak{t}_0 \in \Spinc(S^3_{-n}(C))$.

\begin{lem}
\label{cover}
Let $K \in \mathcal{K}$. Then,
\begin{enumerate}
\item If $\left<K,[v_1]\right> \neq n$, then there is a $q$-to-one map $\mathcal{K} \rightarrow \Char(v_1)$ given by restriction to the first co-ordinate;
\item If $\left<K,[v_1]\right> = n$, then there are $q-r$ elements of $\mathcal{K}$ which map to $(n) \in \Char(v_1)$, also by restriction.
\end{enumerate}
\end{lem}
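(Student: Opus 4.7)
The plan rests on three ingredients. The preceding lemma identifies $\mathcal{K}$ with $\Spinc(S^3_{-p/q}(U))$, so $|\mathcal{K}| = p$; applied to the subgraph $G' = G \setminus v_1$, whose Hirzebruch--Jung continued fraction is $q/r = [a_2,\dots,a_\ell]$, the same result yields $|\mathcal{K}'| = q$ for the analogous set $\mathcal{K}'$. Meanwhile, the admissible range $-a_1 + 2 \leq \langle K, v_1\rangle \leq a_1$ together with the parity condition forces $\langle K,v_1\rangle$ to take one of exactly $n$ values, namely $-n+2, -n+4,\dots, n$.

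For part (1), fix $k_1 \neq n$. Because $a_1 = n$, the vertex $v_1$ is never a peak of such a $K$, so any full tank of $K = (k_1, k_2, \dots, k_\ell)$ has both peaks in $\{v_2,\dots, v_\ell\}$ and lies entirely inside $G'$. Consequently the restriction $K \mapsto (k_2, \dots, k_\ell)$ is a bijection between $\{K \in \mathcal{K} : \langle K, v_1\rangle = k_1\}$ and $\mathcal{K}'$, giving a fiber of size $q$.

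For part (2) I would avoid a direct enumeration of the extensions of $k_1 = n$ (which would require ruling out those $(k_2,\dots,k_\ell) \in \mathcal{K}'$ whose leftmost peak $v_j$ has $k_i = a_i - 2$ for all $2 \leq i < j$, as these together with $v_1$ would produce a fresh full tank) and instead simply close the books: part (1) accounts for $(n-1)q$ elements of $\mathcal{K}$, so the fiber over $(n) \in \Char(v_1)$ contains the remaining $|\mathcal{K}| - (n-1)q = p - (n-1)q = q - r$ elements. The only genuine subtlety, and essentially the whole content of the lemma, is the peak-based observation used in part (1); once that is in place, the second count is forced by the global cardinality of $\mathcal{K}$.
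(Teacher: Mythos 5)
Your proof is correct and follows essentially the same two-step strategy as the paper: identify the fibre over each $k_1 \neq n$ with $\mathcal{K}'$ (the analogous set for $G - v_1$, of size $q$), then deduce the size of the fibre over $n$ by subtracting from $|\mathcal{K}| = p = nq - r$. The one place you add value is in making explicit why restriction is a bijection when $k_1 \neq n$ — namely that $v_1$ cannot be a peak, so the no-full-tank condition on $G$ is equivalent to that on $G'$ — a point the paper asserts without spelling out.
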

\begin{proof}
Consider the case when $\left<K,[v_1]\right> \neq n$. Then the number of covectors $K$ that restrict to $\left<K,[v_1]\right>$ is equal to the number of characteristic covectors on the linear graph $G - v_1$ which satisfy \eqref{plumb}, initiate maximising paths, and have no full tanks. This is just the number of $\Spinc$-structures on the lens space given by $[a_2,\dots,a_\ell]$-surgery on the unknot. As $[a_2,\dots,a_\ell] = q/r$, we are done.

For the second claim, observe that there are $n-1$ values of $\left<K,[v_1]\right>$ covered by the above case, together accounting for $(n-1)q$ of the elements of $\mathcal{K}$. Therefore, the remaining $q-r$ must restrict to $(n) \in \Char(v_1)$.
\end{proof}

\subsection{Adjusting the Maximisers}

It is proved in \cite{OSAbsolute} that if $\overline{W}$ is a negative-definite cobordism from $Y_1$ to $Y_2$, both rational homology spheres, then
\begin{equation}
\left\{\frac{c_1(\overline{\mathfrak{s}})^2 + b_2(\overline{W})}{4}\right\} \leq d(Y_2,\overline{\mathfrak{s}}\vert_{Y_2}) - d(Y_1,\overline{\mathfrak{s}}\vert_{Y_1})
\label{cobord}
\end{equation}
for any $\overline{\mathfrak{s}}\in\Spinc(\overline{W})$. Applying this to $\overline{W} = W_2(U)$, we observe that if $\overline{\mathfrak{s}}$ is the restriction of some $\mathfrak{s} \in \Spinc(W^\prime)$ satisfying $c_1(\mathfrak{s}) \in \mathcal{K}$, then we have equality in \eqref{cobord} (c.f. the proof of Lemma 4.3 in \cite{OSUnknot} and Proposition \ref{sharpprop}). This allows us to compute the term in curly braces and, since the intersection form of $W_2(C)$ is independent of $C$, substitute it into \eqref{cobord} for $\overline{W} = W_2(C)$. Consequently, if $\mathfrak{t}^\prime$ and $\mathfrak{t}$ are cobordant via an element of $\mathcal{K}$,
\begin{equation}
D^n_C(\mathfrak{t}^\prime) \leq D^{p/q}_C(\mathfrak{t}).
\label{spincdif}
\end{equation}

A very similar argument, applying \eqref{cobord} to $W_1(C)$, tells us that
\begin{equation}
0 \leq D^n_C(\mathfrak{t}^\prime).
\label{spincbound}
\end{equation}

This argument on $\mathcal{K}$ is just a special case of the following lemma.

\begin{lem}
\label{inequalities}
Suppose that $\mathcal{F} \subset \Char(G)$ is a complete set of representatives of equivalence classes of $\sim$, and that every $K_i \in \mathcal{F}$ is a maximiser for $[K_i]$. Then on defining $\mathfrak{s}_i$ by $c_1(\mathfrak{s}_i) = K_i$ and setting $w_i = \mathfrak{s}_i\vert_{S^3_{-p/q}(C)}$ and $v_i = \mathfrak{s}_i\vert_{S^3_{-n}(C)}$, it follows that
$$0 \leq D^n_C(v_i) \leq D^{p/q}_C(w_i).$$
\end{lem}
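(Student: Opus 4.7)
The plan is to adapt the argument that the excerpt has already given for $\mathcal{K}$ in equations \eqref{spincdif} and \eqref{spincbound}. That argument used only the maximiser property, which by hypothesis is shared by every $K_i \in \mathcal{F}$, so essentially the same strategy should work, with the sharpness of $W'$ playing the key role.

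First I would isolate the crucial fact: for the unknot, the cobordism inequality \eqref{cobord} is actually an equality on each of $W_1(U)$ and $W_2(U)$ when evaluated on the restriction of $\mathfrak{s}_i$. By Proposition \ref{sharpprop}, applied to the maximiser $K_i$, one has equality in \eqref{cobord} for the full cobordism $W' = W_1(U) \cup W_2(U) : S^3 \to S^3_{-p/q}(U)$, namely
$$\frac{c_1(\mathfrak{s}_i)^2 + b_2(W')}{4} = d(S^3_{-p/q}(U),[K_i]).$$
On the other hand, summing \eqref{cobord} applied separately to the two sub-cobordisms yields an inequality whose left-hand side, via $b_2(W') = b_2(W_1) + b_2(W_2)$ and the decomposition $c_1(\mathfrak{s}_i)^2 = c_1(\mathfrak{s}_i|_{W_1})^2 + c_1(\mathfrak{s}_i|_{W_2})^2$ (valid because the intermediate manifold $S^3_{-n}(U)$ is a rational homology sphere), agrees with the left-hand side above, and whose right-hand side telescopes to $d(S^3_{-p/q}(U), [K_i])$. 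Equality must therefore hold in each sub-inequality.

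Armed with this, I would then apply \eqref{cobord} to $\mathfrak{s}_i|_{W_2(C)}$. Since the intersection form of $W_2(\cdot)$ does not depend on the knot, the quantity $(c_1(\mathfrak{s}_i|_{W_2})^2 + b_2(W_2))/4$ is the same whether computed over $C$ or $U$, and by the previous paragraph it equals $d(S^3_{-p/q}(U),w_i^U) - d(S^3_{-n}(U),v_i^U)$, where $w_i^U$ and $v_i^U$ are the images of $w_i$ and $v_i$ under the standard $\Spinc$-bijection. Plugging this into \eqref{cobord} and rearranging gives exactly $D^n_C(v_i) \leq D^{p/q}_C(w_i)$. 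An identical manoeuvre applied to $W_1(C) : S^3 \to S^3_{-n}(C)$, using $d(S^3) = 0$, produces $0 \leq D^n_C(v_i)$. The only delicate point is the additive decomposition of $c_1^2$ in the first paragraph; since $S^3_{-n}(U)$ has vanishing $H^2(\cdot;\mathbb{Q})$, the relevant Mayer--Vietoris cross terms vanish and the decomposition is routine, so everything else reduces to bookkeeping with the sharpness of $W'$.
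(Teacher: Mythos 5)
Your proposal is correct and follows essentially the same route as the paper: apply the cobordism inequality \eqref{cobord}, use sharpness of $W'$ and the maximiser property to force equality over the unknot, then transfer the curly-brace quantity to the knot $C$ via independence of the intersection form. The paper simply cites the inequalities \eqref{spincdif} and \eqref{spincbound} and refers the equality step to Lemma~4.3 of \cite{OSUnknot}, whereas you spell out the telescoping/additivity argument that produces equality on each sub-cobordism; this is the same idea made explicit rather than a genuinely different approach.
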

\begin{proof}
This is a combination of \eqref{spincbound} on the left and \eqref{spincdif} on the right.
\end{proof}

As it turns out, $\mathcal{K}$ is not the optimal choice of representatives for our purposes, since it does not yield a function $\mathfrak{r}$ satisfying Theorem \ref{main}. We therefore ask: If $K$ is a maximiser, are there any other $K^\prime \sim K$ that are maximisers? The answer is yes.

\begin{lem}
Let $\left<K,[v_i]\right> = a_i$, where $K \in \Char(G)$ (not necessarily in $\mathcal{K}$). Then $K^\prime: = K + 2\PD[v_i]$ satisfies $(K^\prime)^2 = K^2$.
\end{lem}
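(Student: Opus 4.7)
The plan is to expand $(K^\prime)^2$ bilinearly using $K^\prime = K + 2\PD[v_i]$ and invoke the hypothesis at the very end. I interpret $K^2$ as the self-intersection $KQ^{-1}K^t$ on $H^2(W^\prime)$ induced by the intersection form---this is the quantity appearing in \eqref{dformula}---and I view $\PD[v_i]\in H^2(W^\prime, \partial W^\prime)$ as an element of $H^2(W^\prime)$ via the map labelled $Q$ in the short exact sequence earlier in the section. Under this identification, $\PD[v_i]$ corresponds to the $i$th row of $Q$.

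Two routine computations will then do the work. First, by the definition of Poincar\'e duality, $K\cdot \PD[v_i] = \langle K,[v_i]\rangle$, which by hypothesis equals $a_i$. Second, $\PD[v_i]\cdot \PD[v_j] = [v_i]\cdot[v_j] = Q_{ij}$; specialising to $j=i$ yields $\PD[v_i]^2 = w(v_i) = -a_i$. Both identities are immediate from the description of the plumbing in Figure~\ref{linear}. With these in hand, bilinearity of the pairing gives
\begin{align*}
(K^\prime)^2 &= K^2 + 4\, K\cdot \PD[v_i] + 4\, \PD[v_i]^2 \\
&= K^2 + 4 a_i + 4(-a_i) \\
&= K^2,
\end{align*}
as required.

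No step looks like a serious obstacle; the computation is essentially bookkeeping once the identifications among $H_2(W^\prime)$, $H^2(W^\prime,\partial W^\prime)$, and $H^2(W^\prime)$ are in place. The only care needed is to keep track of which pairing ($Q$ or $Q^{-1}$) is used where, and to note that the final cancellation $4a_i + 4(-a_i) = 0$ happens precisely because $v_i$ is a peak: the hypothesis $\langle K,[v_i]\rangle = a_i$ is exactly balanced by the weight $w(v_i) = -a_i$. Pushing down at a non-peak vertex would not yield this cancellation, and is what makes the peak condition the natural one for preserving $K^2$---and, via Proposition~\ref{sharpprop}, the correction term.
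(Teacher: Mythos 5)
Your proof is correct and follows essentially the same route as the paper's: expand $(K')^2 = (K+2\PD[v_i])Q^{-1}(K+2\PD[v_i])^t$ bilinearly, use that $\PD[v_i]Q^{-1}=e_i$ to evaluate the cross term as $\langle K,[v_i]\rangle = a_i$ and the square term as $Q_{ii}=w(v_i)=-a_i$, and observe the cancellation. The identifications and bookkeeping you flag are exactly what the paper uses.
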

\begin{proof}
Recall that $\PD[v_i]$, viewed as an element of $H^2(W)$, is the $i$-th row of $Q$. Hence, $\PD[v_i]Q^{-1} = e_i$, the $i$-th standard basis vector. Thus,
\begin{align*}
(K+2\PD[v_i])^2 &= (K+2\PD[v_i])Q^{-1}(K+2\PD[v_i])^t \\
&= KQ^{-1}K^t + 4\PD[v_i]Q^{-1}K^t + 4\PD[v_i]Q^{-1}\PD[v_i]^t\\
&= KQ^{-1}K + 4e_iK^t + 4e_i\PD[v_i]^t\\
&= KQ^{-1}K + 4\left<K,[v_i]\right> - 4a_i,
\end{align*}
and as $\left<K,[v_i]\right> = a_i$, we are done.
\end{proof}

\begin{cor}
\label{sharp}
If $K^\prime$ is a push-down of $K \in \mathcal{K}$, then $K^\prime$ is a maximiser of $[K]$.
\end{cor}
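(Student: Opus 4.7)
The plan is that the corollary should follow from the preceding lemma by a straightforward induction. By Proposition \ref{sharpprop}, since $K \in \mathcal{K}$, the covector $K$ is already a maximiser for $[K]$. It therefore suffices to show that any push-down $K^\prime$ of $K$ satisfies both $(K^\prime)^2 = K^2$ and $[K^\prime] = [K]$ in $\Spinc(\partial W^\prime)$, since these two facts together mean $K^\prime$ attains the same maximal value of $LQ^{-1}L^t$ over $L \in \Char(G)$ with $[L] = [K]$.

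Concretely, I would first handle a single-step push-down $K^{\prime\prime} := K + 2\PD[v_i]$ performed at a peak $v_i$ of $K$. The preceding lemma applies directly (its hypothesis $\langle K, [v_i]\rangle = a_i$ is exactly the peak condition) to give $(K^{\prime\prime})^2 = K^2$. At the same time, $K^{\prime\prime} - K = 2\PD[v_i]$ lies in the image of $Q$, which was identified with $\ker \alpha$ via the short exact sequence for $W^\prime$ displayed earlier; hence $[K^{\prime\prime}] = [K]$.

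A general push-down $K^\prime$ is, by definition, a composition of such single steps, each performed at a peak of the then-current intermediate covector. Inducting on the number of steps, at each stage both the square and the equivalence class are preserved, and the peak hypothesis required to invoke the preceding lemma is automatic at every step by the definition of push-down. The induction yields $(K^\prime)^2 = K^2$ and $[K^\prime] = [K]$, so $K^\prime$ is a maximiser for $[K]$. There is no real obstacle to this argument; the only thing worth flagging is that the preceding lemma is stated for covectors in $\Char(G)$ with no restriction to $\mathcal{K}$, which is what allows it to be applied at intermediate covectors along the push-down sequence even though those intermediate covectors need not themselves lie in $\mathcal{K}$.
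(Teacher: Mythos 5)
Your proof is correct and is essentially the argument the paper leaves implicit: the preceding lemma gives preservation of the square under a single push-down, the observation that $K^{\prime\prime}-K = 2\PD[v_i]$ lies in $2\cdot\ker\alpha$ gives $[K^{\prime\prime}]=[K]$, and Proposition \ref{sharpprop} supplies the base case that $K\in\mathcal{K}$ is a maximiser. Your remark that the lemma's hypothesis is stated for all of $\Char(G)$ (not just $\mathcal{K}$), so that it applies at intermediate covectors, is exactly the point that makes the induction go through cleanly.
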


\begin{cor}
Let $\mathcal{M}$ be the set of all maximisers in $\Char(G)$. Then if $K \in \mathcal{M}$, so are all its push-downs.
\end{cor}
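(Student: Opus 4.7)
The plan is to argue by induction on the number of push-down moves, with the single-step case reducing immediately to the previous two results. For the base case, suppose $K \in \mathcal{M}$ and $v_i$ is a peak of $K$, so that $\left<K,[v_i]\right> = a_i$ and $K^\prime := K + 2\PD[v_i]$ is a push-down. First I would observe that $K^\prime - K = 2\PD[v_i]$ lies in the image of $Q$, hence in $\ker\alpha$ from the short exact sequence of the previous section, so $[K^\prime] = [K]$ in $\Spinc(\partial W^\prime)$ (equivalently, $K^\prime \sim K$). Next, the preceding lemma — which was explicitly stated for $K \in \Char(G)$ without the requirement $K \in \mathcal{K}$ — gives $(K^\prime)^2 = K^2$. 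Since $K$ is a maximiser for $[K]$, its square is maximal over all characteristic covectors representing $[K]$; because $K^\prime$ represents the same class and achieves the same square, $K^\prime$ is also a maximiser.

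For the inductive step, if $K^\prime$ is obtained from $K \in \mathcal{M}$ by a sequence of push-downs, write $K^\prime = K^{\prime\prime} + 2\PD[v_j]$ where $K^{\prime\prime}$ is obtained from $K$ by one fewer move and $v_j$ is a peak of $K^{\prime\prime}$. By inductive hypothesis $K^{\prime\prime} \in \mathcal{M}$, and by the base case applied to $K^{\prime\prime}$, so too is $K^\prime$. No real obstacle arises here: the single subtle point is simply that the squaring-invariance lemma must be available for covectors outside $\mathcal{K}$, since intermediate push-downs along a sequence need not satisfy the two-sided bound \eqref{plumb}. That the lemma was stated in precisely this generality is exactly what makes the induction go through.
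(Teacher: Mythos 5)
Your proof is correct and is precisely the argument the paper leaves implicit (both corollaries following the squaring lemma are stated without proof). You correctly identify the one subtle point, namely that the squaring lemma must hold for all $K \in \Char(G)$ and not just $K \in \mathcal{K}$, and you correctly note that the paper states it in exactly that generality for this reason.
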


Finally, we are able to prove the following critical lemma of this section.

\begin{lem}
\label{totalineq}
Let $C$ be a knot in $S^3$, then
\begin{align*}
\sum_{\mathfrak{t} \in \Spinc(S^3_{-p/q}(C))} D^{p/q}_C(\mathfrak{t}) \geq q \cdot &\sum_{\mathfrak{t} \in \Spinc(S^3_{-n}(C))} D^n_C(\mathfrak{t}) - r\cdot \min_{\mathfrak{t}\in\Spinc(S^3_{-n}(C))}\left\{ D^n_C(\mathfrak{t})\right\}.
\end{align*}
\end{lem}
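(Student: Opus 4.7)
The plan is to combine the pointwise inequality of Lemma \ref{inequalities} with a carefully chosen complete set $\mathcal{F} \subset \Char(G)$ of maximising representatives. For any such $\mathcal{F}$, summing Lemma \ref{inequalities} yields
\[
\sum_{\mathfrak{t} \in \Spinc(S^3_{-p/q}(C))} D^{p/q}_C(\mathfrak{t}) \;=\; \sum_{K \in \mathcal{F}} D^{p/q}_C(w_K) \;\geq\; \sum_{K \in \mathcal{F}} D^n_C(v_K),
\]
so it suffices to exhibit an $\mathcal{F}$ whose multiset of restrictions $\{v_K\}_{K \in \mathcal{F}}$ hits each $\mathfrak{t}' \in \Spinc(S^3_{-n}(C))$ exactly $q$ times, save for a single minimiser $\mathfrak{t}^*$ of $D^n_C$ appearing only $q - r$ times.

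Starting with $\mathcal{F} = \mathcal{K}$, Lemma \ref{cover}, read via the correspondence $k = \left<K,[v_1]\right> \mapsto \mathfrak{t}_{(k+n)/2}$ (with $k = n$ mapping to $\mathfrak{t}_0$), gives fibres of size $q$ over every $\mathfrak{t}_i$ with $i \neq 0$ and of size $q-r$ over $\mathfrak{t}_0$. This furnishes the bound $\sum_{K \in \mathcal{K}} D^n_C(v_K) = q \sum D^n_C - r D^n_C(\mathfrak{t}_0)$. When $D^n_C$ is constant, $\mathfrak{t}_0$ is itself a minimiser and we are done; otherwise Lemma \ref{minimising} identifies $\mathfrak{t}_0$ as the unique maximiser while a minimiser $\mathfrak{t}^*$ sits at the centralmost position $\mathfrak{t}_{n/2}$ or $\mathfrak{t}_{(n\pm 1)/2}$.

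In this latter case, I would modify $\mathcal{K}$ into $\mathcal{F}'$ by reallocating the fibre deficit from $\mathfrak{t}_0$ to $\mathfrak{t}^*$: take $r$ of the $q$ equivalence classes currently represented in the $\mathfrak{t}^*$-fibre and, using Corollary \ref{sharp}, substitute each canonical representative by an alternative maximiser in the same class whose first coordinate is $\equiv n \pmod{2n}$. Pushing down at $v_2$ shifts the first coordinate by $+2$, so an appropriately interleaved chain of push-downs can be arranged to transport a canonical $\mathcal{K}$-representative (first coordinate $0$ when $n$ is even, $\pm 1$ when $n$ is odd) to one with first coordinate $\equiv n \pmod{2n}$ while remaining in $\mathcal{M}$. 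The resulting $\mathcal{F}'$ has fibre size $q$ everywhere except $\mathfrak{t}^*$ itself, whose fibre has size $q - r$, and the sum evaluates to $q \sum D^n_C - r D^n_C(\mathfrak{t}^*) = q \sum D^n_C - r \min D^n_C$, as required.

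The main obstacle will be the push-down claim in full generality: for each of the $r$ chosen classes in the $\mathfrak{t}^*$-fibre, one must produce a sequence of push-downs inside $\mathcal{M}$ landing at a maximiser with first coordinate $\equiv n \pmod{2n}$. This requires a combinatorial tracing of peaks through interleaved pushes at $v_1, v_2, v_3, \dots$ (only pushes at $v_2$ shift the first coordinate modulo $2n$), leaning on the no-full-tanks property and the range constraint \eqref{plumb} to keep a supply of peaks available throughout the process.
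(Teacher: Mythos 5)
Your framing is the right one: sum Lemma \ref{inequalities} over a complete set of maximising representatives $\mathcal{F}$ whose multiset of restrictions to $\Spinc(S^3_{-n}(C))$ hits each class $q$ times except a minimiser of $D^n$, which it hits $q-r$ times. Your accounting for $\mathcal{F}=\mathcal{K}$ is also correct: by Lemma \ref{cover} the deficit sits at $\mathfrak{t}_0$, which Lemma \ref{minimising} identifies as the \emph{maximiser} of $D^n$, so the bound obtained from $\mathcal{K}$ alone is genuinely too weak.

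The gap is the reallocation step, and it is a real one, not merely an obstacle to be finessed. You propose to take $r$ of the $q$ classes currently contributing to the $\mathfrak{t}^*$-fibre (first coordinate $0$ or $\pm 1$) and replace their representatives with maximisers in the same $\sim$-class whose first coordinate is $\equiv n \pmod{2n}$. Such a maximiser need not exist. Take $p/q = 7/2 = [4,2]$, so $n=4$, $r=1$. The two elements of $\mathcal{K}$ with first coordinate $0$ are $(0,0)$ and $(0,2)$. The covector $(0,0)$ has no peaks and admits no push-downs at all; $(0,2)$ admits the single push-down to $(2,-2)$, which is terminal. Direct computation of $KQ^{-1}K^t$ over each $\sim$-class shows there is no maximiser in either class with first coordinate $\pm 4$. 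So neither of the two candidates can be sent to the $\mathfrak{t}_0$-fibre, and $\mathcal{F}'$ cannot be built as you describe. The deeper reason is that once you push down at $v_2$ (shifting the first coordinate by $+2$), the value at $v_2$ drops to $-a_2$, and to push at $v_2$ again you would need to raise it back to $a_2$ by pushing at $v_1$ or $v_3$ --- which costs you either a $-2n$ on the first coordinate or drains a finite supply of peaks to the right. A ``long march'' from first coordinate $0$ up to $n$ inside $\mathcal{M}$ is not available in general.

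The paper avoids this by never asking any single covector to travel far. It builds $\mathcal{K}'$ by a cascade: for each $j$ with $-1 \le j < n$, it takes the $r$ covectors $K \in \mathcal{K}$ with $\left<K,[v_1]\right>=j$ and $K\vert_{G-v_1}$ \emph{left-full}, and replaces each by its single-step push-down $K + 2\sum_{i=2}^{k}\PD[v_i]$ (where $v_k$ is the first peak), which raises the first coordinate by exactly $2$. Each such push-down is legitimate by Corollary \ref{sharp}, and the left-fullness hypothesis guarantees the peak at $v_k$ exists. Because the same number $r$ of covectors leave and enter every interior fibre, the net effect is that the $q-r$ deficit migrates from the $j=n$ fibre (i.e.\ $\mathfrak{t}_0$) down to the $j=-1$ or $j=0$ fibre (i.e.\ the central minimiser), which is exactly what you wanted but achieved by a chain of local moves rather than one global one. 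The count of $r$ left-full covectors per fibre is itself a consequence of Lemma \ref{cover}: since $q-r$ covectors with first coordinate $n$ survive the no-full-tanks condition, exactly $r$ of the $q$ restrictions to $G-v_1$ must be left-full. If you want to salvage your write-up, replace the ``direct jump to $\mathfrak{t}_0$'' with this cascade and prove the count-of-$r$ claim.
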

\begin{proof}
Construct a family $\mathcal{K}^\prime$ of characteristic covectors for use in Lemma \ref{inequalities} as follows. If $K \in \mathcal{K}$ satisfies
\begin{enumerate}
\item $\left<K,[v_1]\right> = j$ for some $-1 \leq j < n$; and
\item $K\vert_{G-v_1}$ is left full,
\end{enumerate}
then let $K^\prime := K + 2\sum_{i=1}^k\PD[v_i]$ be a member of $\mathcal{K}^\prime$. Here $k \geq 2$ is the smallest integer such that $v_k$ is a peak for $K$ (guaranteed to exist by the second condition above). This clearly determines the same $\Spinc$-structure on the boundary manifolds, and is a maximiser by Corollary \ref{sharp}.

For all other $K \in \mathcal{K}$, let $K$ be a member of $\mathcal{K}^\prime$. The family $\mathcal{K}^\prime$ is now clearly a complete set of representatives for the equivalence classes of $\sim$, each element of which is a maximiser. We claim that the desired result is obtained by adding up all inequalities in Lemma \ref{inequalities}, using $\mathcal{F} = \mathcal{K}^\prime$.

To prove this claim, let us consider what the pushing down does. Our first piece of information is that $\left<K^\prime,[v_1]\right> = j+2$, so we are ``nudging $K$ up'' the values in the first co-ordinate. We claim that, for a given $j$, we have nudged up precisely $r$ different $K$. Indeed, recall from Lemma \ref{cover} that there are $q-r$ elements of $\mathcal{K}$ with $\left<K,[v_1]\right>=n$. Another way of computing this number is:
$$\#\mathcal{K}\vert_{G-v_1} - \#\left\{ \begin{array}{c}\text{left-full elements of}\\ \mathcal{K}\vert_{G-v_1}\end{array}\right\}.$$
Since the first term here is $q$ (as a scholium of Lemma \ref{cover}), the second term must be $r$, as required.

This calculation completed, we now observe that $\mathcal{K}^\prime$ has $q$ elements that restrict to $(j) \in \Char(v_1)$ for any $-n+2 \leq j \leq n$, except $j = -1$ if $n$ is odd or $j = 0$ if $n$ is even, when there are $q-r$ such elements. Our lemma follows by applying Lemma \ref{inequalities} and adding up all the inequalities. Notice that the exceptional $\Spinc$-structure is one with minimal deficiency (see Lemma \ref{minimising}).
\end{proof}
\section{Proofs of the theorems}

Now that all the machinery is in place, we can rapidly prove Theorem \ref{main}.

\begin{proof}[Proof of Theorem \ref{main}]
Take Lemma \ref{totalineq}, observing that we actually have equality by Lemma \ref{eq}. This implies that all the right hand inequalities in Lemma \ref{inequalities} were in fact equalities induced by the members of $\mathcal{K}^\prime$. The result follows.
\end{proof}

As remarked after the proof of Lemma \ref{eq}, we had actually already proved Theorem \ref{main} some time ago. However, this more recent proof has the advantage that it gives us insights the previous one did not: it allows us to see how $\mathfrak{r}$ behaves. Indeed, take any $\mathfrak{t} \in \Spinc(S^3_{-p/q}(C))$ and some maximiser $K$ for $\mathfrak{t}$. Then the value $\mathfrak{r}(\mathfrak{t})$ is determined by finding the $K^\prime \in \mathcal{K}^\prime$ such that $K \sim K^\prime$; $\mathfrak{r}(\mathfrak{t})$ is the $\Spinc$-structure on $S^3_{-n}(C)$ determined by the maximiser $(\left<K^\prime,[v_1]\right>)$.

\begin{cor}
\label{min}
With notation as above, $D^{p/q}(\mathfrak{t})$ is minimal if $\left<K^\prime,[v_1]\right> = 0, \pm 1$. If, additionally, $n$ is even and there are $q-r+1$ choices of $\mathfrak{t}$ such that $D^{p/q}(\mathfrak{t})$ is minimal, then this extends to $\left<K^\prime,[v_1]\right> = \pm 2$.

In either case, if $D^{p/q}(\mathfrak{t}) = 0$ for some $\mathfrak{t}$, then the minimal deficiency is zero.
\end{cor}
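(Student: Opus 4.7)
The plan is to combine Theorem \ref{main}---via its consequence $D^{p/q}(\mathfrak{t}) = D^n(\mathfrak{r}(\mathfrak{t}))$ and the explicit description of $\mathfrak{r}$ given just before the corollary---with Lemma \ref{minimising}, which pins down the minimisers of $D^n$. Recall that under this identification, $\mathfrak{r}(\mathfrak{t})$ corresponds to $\mathfrak{t}_{(k+n)/2}$, where $k := \left<K^\prime,[v_1]\right>$, and $k \equiv n \pmod 2$.

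For the first claim, I would rewrite the condition $k \in \{0,\pm 1\}$ by parity: $k=0$ is the only admissible value when $n$ is even, yielding $\mathfrak{r}(\mathfrak{t}) = \mathfrak{t}_{n/2}$; while $k=\pm 1$ are the only admissible values when $n$ is odd, yielding $\mathfrak{r}(\mathfrak{t}) = \mathfrak{t}_{(n\pm1)/2}$. Lemma \ref{minimising} identifies these as exactly the minimisers of $D^n$, so $D^{p/q}(\mathfrak{t}) = D^n(\mathfrak{r}(\mathfrak{t}))$ realises the minimum.

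For the second claim I would run a short fiber count, assuming $n$ is even. From the proof of Lemma \ref{totalineq}, the fiber of $\mathfrak{r}$ over $\mathfrak{t}_{n/2}$ has size $q-r$, while every other fiber has size $q$. If at least $q-r+1$ of the $\mathfrak{t}$ minimise $D^{p/q}$, then some minimising $\mathfrak{t}$ must lie outside this exceptional fiber, so $\mathfrak{r}$ produces a minimiser $\mathfrak{t}_i$ of $D^n$ with $i \neq n/2$. The central-interval statement in the proof of Lemma \ref{minimising}---that if $\mathfrak{t}_i$ minimises for $i \neq 0$ then so does every $\mathfrak{t}_j$ with $\min(i,n-i) \leq j \leq \max(i,n-i)$---then forces both $\mathfrak{t}_{n/2-1}$ and $\mathfrak{t}_{n/2+1}$ to minimise $D^n$. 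Pulling back through $\mathfrak{r}$, every $\mathfrak{t}$ with $\left<K^\prime,[v_1]\right>=\pm 2$ therefore minimises $D^{p/q}$ as well.

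The final sentence is then immediate: Theorem \ref{main} together with the nonnegativity of $D^n$ yields $D^{p/q}(\mathfrak{t}) \geq 0$ throughout, so any occurrence of the value $0$ is automatically the minimum. I anticipate no serious obstacle here; the only point needing a moment's care in the second claim is the degenerate case $i=0$, but then $\mathfrak{t}_0$ would simultaneously minimise and \emph{maximise} $D^n$ (the latter by the proof of Lemma \ref{minimising}), forcing $D^n$ to be constant and making the conclusion trivial.
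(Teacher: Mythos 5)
Your proof is correct and takes essentially the same route as the paper: the first claim follows from Theorem \ref{main} combined with Lemma \ref{minimising} via the description of $\mathfrak{r}$ through $K'\mapsto\mathfrak{t}_{(k+n)/2}$; the second is the fibre-count argument exploiting the size-$(q-r)$ fibre over $\mathfrak{t}_{n/2}$ together with the central-interval observation from the proof of Lemma \ref{minimising}; and the third is just non-negativity of the deficiencies. You have merely filled in details (the parity dichotomy and the degenerate $i=0$ case) that the paper leaves implicit.
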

\begin{proof}
The first statement is an immediate consequence of Theorem \ref{main} and Lemma \ref{minimising}. The second arises because there are only $q-r$ such $\mathfrak{t}$ with first co-ordinate $0$, and because those $K^\prime$ with $\left<K^\prime,[v_1]\right> = \pm 2$ have the next smallest deficiencies (by an argument almost identical to that in Lemma \ref{minimising}). The final comment is a trivial by-product of Lemma \ref{inequalities} as the deficiencies are non-negative.
\end{proof}

As mentioned in Section \ref{introduction}, this knowledge of $\mathfrak{r}$ allows us to turn Theorem \ref{main} into an obstruction, given $Y$, to $Y = S^3_{-p/q}(C)$ (under certain extra circumstances). We have already stated this obstruction as Theorem \ref{changemaker}, but to prove it we must establish some algebraic preliminaries. For greater detail on these preliminaries, we refer the reader to Lemma 2.3 of \cite{GreeneGenus} and Section 3.2 of \cite{GreeneBraid}. We have summarised the key results below.

\begin{prop}
Let $Y$ be a 3-manifold obtained by integral surgery on an $\ell$-component link $L$ with negative-definite linking matrix $Q$ and trace $W$; let also $Y^\prime$ be a manifold obtained by integral surgery on a (possibly different) link $L^\prime$, also with linking matrix $Q$, but whose trace $W^\prime$ is sharp. Finally, suppose that $\partial X = -Y$, where $X$ is a sharp, negative-definite smooth 4-manifold, that $H_1(X) = 0$, and that $H_2(X)$ is free. Then since $c_1$ commutes with the restriction maps on $\Spinc(\cdot)$ and $H^2(\cdot)$ induced by inclusion of a 3- or 4-manifold into a 4-manifold, there is a bijection
\begin{align}
\notag
\left\{\mathfrak{s} \in \Spinc(X\cup_Y W)\left\vert \mathfrak{s}\vert_Y = \mathfrak{t}\right\}\right.  &\longrightarrow \left\{(\mathfrak{s}_X,\mathfrak{s}_W) \in \Spinc(X)\times \Spinc(W)\left\vert \begin{array}{c} \mathfrak{s}_X\vert_Y = \mathfrak{t} \\ \mathfrak{s}_W\vert_Y = \mathfrak{t}\end{array}\right\}\right. \\
\label{biject}
\mathfrak{s} &\longmapsto (\mathfrak{s}\vert_X,\mathfrak{s}\vert_W)
\end{align}
such that
$$c_1(\mathfrak{s})^2 = c_1(\mathfrak{s}\vert_X)^2 + c_1(\mathfrak{s}\vert_W)^2.$$
Moreover, given $\mathfrak{t} \in \Spinc(Y)$, there is some $\mathfrak{s} \in \Spinc(X \cup_Y W)$ such that
\begin{equation}
\label{sharpy}
\max_{\substack{\mathfrak{s}\in\Spinc(X\cup_Y W)\\\mathfrak{s}\vert_Y = \mathfrak{t}}}c_1(\mathfrak{s})^2 + b_2(X\cup_Y W) = 4d(Y^\prime,\mathfrak{t}) - 4d(Y,\mathfrak{t}).
\end{equation}
\end{prop}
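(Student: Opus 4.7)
The plan is to obtain the bijection from a standard Mayer--Vietoris gluing argument for $\Spinc$-structures across $Y$, and then to compute the required maximum by splitting it through this bijection and invoking sharpness independently on $X$ and on the integral-surgery trace.

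First, for the bijection, I would apply Mayer--Vietoris to the decomposition $X \cup_Y W$. The hypotheses $H_1(X) = 0$ and $H_2(X)$ free, the standard vanishing $H_1(W) = 0$ (since $W$ is a trace of $2$-handle attachments on $B^4$), and the fact that $Y$ is a rational homology sphere (its first homology equals $\operatorname{coker} Q$, which is finite as $Q$ is nondegenerate) reduce the relevant portion of the sequence to
$$0 \longrightarrow H^2(X \cup_Y W) \longrightarrow H^2(X) \oplus H^2(W) \longrightarrow H^2(Y) \longrightarrow 0,$$
the last map being the difference of restrictions. Combined with $H^1(Y) = 0$, this is precisely what is needed to glue $\Spinc$-structures: a pair $(\mathfrak{s}_X, \mathfrak{s}_W)$ with $\mathfrak{s}_X\vert_Y = \mathfrak{s}_W\vert_Y$ lifts uniquely to a $\Spinc$-structure on $X \cup_Y W$, which gives the bijection $\mathfrak{s} \mapsto (\mathfrak{s}\vert_X, \mathfrak{s}\vert_W)$. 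The additivity $c_1(\mathfrak{s})^2 = c_1(\mathfrak{s}\vert_X)^2 + c_1(\mathfrak{s}\vert_W)^2$ is then immediate, because the same Mayer--Vietoris argument yields $H_2(X \cup_Y W) \cong H_2(X) \oplus H_2(W)$ as an orthogonal direct sum of intersection forms.

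For the second claim, I would use the bijection to separate the maximum as
$$\max_{\mathfrak{s}\vert_Y = \mathfrak{t}} c_1(\mathfrak{s})^2 \;=\; \max_{\mathfrak{s}_X\vert_Y = \mathfrak{t}} c_1(\mathfrak{s}_X)^2 \;+\; \max_{\mathfrak{s}_W\vert_Y = \mathfrak{t}} c_1(\mathfrak{s}_W)^2,$$
with the right-hand side realised by independently choosing maximisers on each factor and regluing. Sharpness of $X$ as a filling of $-Y$, together with the identity $d(-Y,\mathfrak{t}) = -d(Y,\mathfrak{t})$ for $\mathbb{Q}$-homology spheres, evaluates the first summand to $-4d(Y,\mathfrak{t}) - b_2(X)$. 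For the second, I would use that $W$ and $W^\prime$ share the linking matrix $Q$, which gives a canonical identification $\Spinc(W) \leftrightarrow \Spinc(W^\prime)$ (both realised as $\Char(Q)$) that intertwines the restriction maps to the boundaries and preserves $c_1^2$; sharpness of $W^\prime$ then makes the second summand $4d(Y^\prime,\mathfrak{t}) - b_2(W^\prime)$. Adding these, and using $b_2(W) = b_2(W^\prime)$ together with $b_2(X \cup_Y W) = b_2(X) + b_2(W)$, produces the stated formula.

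The only delicate point, and the one I would flag as the main obstacle, is the identification $\Spinc(Y) \leftrightarrow \Spinc(Y^\prime)$ implicit in writing the same symbol $\mathfrak{t}$ on both boundaries. This identification is forced by the common matrix $Q$, so that both $H^2(Y)$ and $H^2(Y^\prime)$ are described by $\operatorname{coker} Q$, and one needs to check carefully that it intertwines the restriction-plus-maximisation procedures on $W$ and on $W^\prime$. Once this matching is set up, the remainder is formal bookkeeping combining Mayer--Vietoris with the definition of sharpness.
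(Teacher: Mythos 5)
Your approach is essentially the same one used in the cited sources; the paper itself does not reprove this proposition but instead summarises Lemma 2.3 of \cite{GreeneGenus} and Section 3.2 of \cite{GreeneBraid}, and what you have written is a faithful reconstruction of the Mayer--Vietoris gluing argument that those references carry out. In particular the split of the maximum over the product, the use of $d(-Y,\mathfrak{t}) = -d(Y,\mathfrak{t})$ with sharpness of $X$, the transfer of sharpness from $W'$ to $W$ via the common linking matrix $Q$, and the bookkeeping with $b_2$ are all exactly the intended steps.

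One point worth making explicit, because the paper itself comments on it: for the maximum formula to read as stated you need each of $\Spinc(X) \to \Spinc(-Y)$ and $\Spinc(W) \to \Spinc(Y)$ to surject, so that both factors in the Minkowski sum you form are nonempty for every $\mathfrak{t}$ and sharpness can be applied in each $\Spinc$-class. The Mayer--Vietoris exactness at $H^2(X)\oplus H^2(W)$ only gives the combined surjectivity; the individual surjectivities follow from the long exact sequence of the pair, since $H^3(X,\partial X) \cong H_1(X) = 0$ and $H^3(W,\partial W) \cong H_1(W) = 0$. This is precisely why the paper replaces Greene's hypothesis that $\det Q$ is odd by the freeness assumptions on $H_1(X)$ and $H_1(W)$, and it is the one technical input your sketch uses implicitly without naming.
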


Note that in \cite{GreeneBraid}, the additional assumption was made that $\det(Q)$ is odd. This, however, is not necessary: its only function was to ensure that $\Spinc(X) \rightarrow \Spinc(Y)$ and $\Spinc(W) \rightarrow \Spinc(Y)$ surject. This is assured by the fact that $H_1(X)$ and $H_1(W)$ are torsion-free (c.f. \cite{GreeneGenus}).

Taking this proposition as given, it follows that a maximiser $c_1(\mathfrak{s})$ decomposes into a pair of maximisers $(c_1(\mathfrak{s}\vert_X),c_1(\mathfrak{s}\vert_W))$. To see what this decomposition looks like, at least on $W$, we use the diagram
\begin{equation}
\label{algtop}
\begin{CD} H_2(X) \oplus H_2(W) @>>> H_2(X\cup_Y W) \simeq H^2(X \cup_Y W) @>>> H^2(X) \oplus H^2(W).\end{CD}
\end{equation}
If we employ a basis $\{u_1,\dots, u_\ell\}$ for $H_2(W)$ with images $\{\overline{u_1},\dots,\overline{u_\ell}\}$ in $H_2(X\cup_Y W)$, a class $\alpha \in H^2(W\cup_Y X)$ restricts to the class $(\left<\alpha,\overline{u_1}\right>,\dots,\left<\alpha,\overline{u_\ell}\right>)\in H^2(W)$ when written in the dual basis $\{u_1^*,\dots, u_\ell^*\}$. In particular this applies when $\alpha = c_1(\mathfrak{s})$: the restriction $c_1(\mathfrak{s}\vert_W)$ has the form $(\left<c_1(\mathfrak{s}),\overline{u_1}\right>,\dots,\left<c_1(\mathfrak{s}),\overline{u_\ell}\right>)$.

Now suppose that $K$ is a maximiser for $[K]$ and that $K = c_1(\mathfrak{s}_W)$ for some $\mathfrak{s}_W \in \Spinc(W)$. Suppose also that if we put $\mathfrak{t} = [K]$ in \eqref{sharpy}, the RHS vanishes. Then since $X$ is sharp, there is some $\mathfrak{s}^\prime \in \Spinc(X\cup_Y W)$ with $\alpha^\prime = c_1(\mathfrak{s}^\prime)$ such that
$$- b_2(X\cup_Y W) = (\alpha^\prime)^2 = c_1(\mathfrak{s}^\prime\vert_X)^2 + c_1(\mathfrak{s}^\prime\vert_W)^2.$$
As we know that $c_1(\mathfrak{s}^\prime\vert_W)$ is also a maximiser for $\mathfrak{t}$, it follows that its square is $K^2$. Thus, letting $\mathfrak{s} \in \Spinc(X\cup_Y W)$ correspond to $(\mathfrak{s}^\prime\vert_X, \mathfrak{s}_W)$ under \eqref{biject}, and putting $\alpha = c_1(\mathfrak{s})$, we have
$$(\alpha^\prime)^2 = c_1(\mathfrak{s}^\prime\vert_X)^2 + c_1(\mathfrak{s}^\prime\vert_W)^2 = c_1(\mathfrak{s}^\prime\vert_X)^2 + K^2 = c_1(\mathfrak{s}^\prime\vert_X)^2 + c_1(\mathfrak{s}_W)^2 = \alpha^2.$$
Hence $\alpha^2 + b_2(X\cup_Y W) = 0$.

Since $X\cup_Y W$ is a closed, simply connected, negative-definite smooth 4-manifold, it follows from Donaldson's diagonalisation theorem \cite{Donaldson} that
$$(H^2(X\cup_Y W),Q_{X\cup_Y W}) \simeq (\mathbb{Z}^{b_2(X)+b_2(W)},-\id)$$
as lattices. Thus, since $\alpha$ is a characteristic covector of $Q_{X\cup_Y W}$, it follows that $\alpha \equiv (1,1,\dots, 1) \mod 2$, whence all entries of $\alpha$ are $\pm 1$. Summarised, we have the following lemma.

\begin{lem}
\label{maximisers}
Let $K$ be a maximiser for $\mathfrak{t} = [K]$ such that
$$d(Y^\prime,\mathfrak{t}) - d(Y,\mathfrak{t})= 0.$$
Then there is some $\alpha \in \{\pm 1\}^{b_2(X)+\ell}$ such that $K = (\left<\alpha,\overline{u_1}\right>,\dots,\left<\alpha,\overline{u_\ell}\right>)$, written in the dual basis $\{u_1^*,\dots,u_\ell^*\}$.
\end{lem}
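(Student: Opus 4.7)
The strategy is to promote $K$ to the first Chern class of a $\Spinc$-structure on the closed manifold $Z := X\cup_Y W$ whose square saturates the bound forced by negative-definiteness, and then invoke Donaldson's diagonalisation theorem. This is essentially a packaging of the discussion preceding the lemma.

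First, since $K$ is characteristic for $Q_W$ and $W$ is simply connected, there is a unique $\mathfrak{s}_W \in \Spinc(W)$ with $c_1(\mathfrak{s}_W) = K$. Because $X$ is sharp and $d(Y',\mathfrak{t}) - d(Y,\mathfrak{t}) = 0$, equation \eqref{sharpy} provides some $\mathfrak{s}' \in \Spinc(Z)$ with $\mathfrak{s}'\vert_Y = \mathfrak{t}$ and $c_1(\mathfrak{s}')^2 = -b_2(Z)$. Use the bijection \eqref{biject} to splice: let $\mathfrak{s} \in \Spinc(Z)$ correspond to the pair $(\mathfrak{s}'\vert_X, \mathfrak{s}_W)$. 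Both $c_1(\mathfrak{s}'\vert_W)$ and $K = c_1(\mathfrak{s}_W)$ are maximisers for $\mathfrak{t}$ on $W$, hence have the same square, and so the additivity statement of \eqref{biject} gives $c_1(\mathfrak{s})^2 = c_1(\mathfrak{s}'\vert_X)^2 + K^2 = c_1(\mathfrak{s}')^2 = -b_2(Z)$.

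Next, apply Donaldson's theorem. The manifold $Z$ is closed (since $\partial X = -Y = -\partial W$), simply connected (by van Kampen, as $Y$ is connected and both $X$ and $W$ are simply connected, the latter because it is the trace of an integral surgery on a link in $S^3$), and negative-definite; hence $(H^2(Z),Q_Z) \cong (\mathbb{Z}^{b_2(X)+\ell},-\id)$ as lattices. Setting $\alpha := c_1(\mathfrak{s})$, the characteristicity condition forces each coordinate of $\alpha$ in the standard basis to be odd, and the equality $\alpha^2 = -(b_2(X)+\ell)$ then pins each coordinate to $\pm 1$.

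Finally, read off the restriction to $W$: by construction $\mathfrak{s}\vert_W = \mathfrak{s}_W$, so $K = c_1(\mathfrak{s}\vert_W)$, and the description of the restriction map appearing in diagram \eqref{algtop} rewrites this as $(\langle \alpha,\overline{u_1}\rangle, \dots, \langle \alpha,\overline{u_\ell}\rangle)$ in the dual basis, as claimed. The only subtle point is the splicing move, which works precisely because any two maximisers for the same $\mathfrak{t} \in \Spinc(Y)$ have the same square in $H^2(W)$; nothing else in the argument is deeper than juggling notation and invoking Donaldson.
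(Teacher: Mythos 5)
Your proof is correct and follows essentially the same route as the paper's: obtain a $\Spinc$-structure of maximal square from sharpness of $X$ via \eqref{sharpy}, splice it with $\mathfrak{s}_W$ through the bijection \eqref{biject} using that maximisers for the same $\mathfrak{t}$ have equal square, and then apply Donaldson's diagonalisation theorem together with the characteristicity of $\alpha$ to conclude the entries are $\pm 1$.
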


To apply this lemma, we let $Y^\prime = S^3_{-p/q}(U)$ (whose trace $W^\prime$ is sharp), and $Y = S^3_{-p/q}(C)$ (with trace $W$). Theorem \ref{changemaker} is now finally within reach.

\subsection{Proof of Theorem \ref{changemaker} when $p > q > 1$}
\label{mainproof}

Let our basis $\{u_1,\dots,u_\ell\}$ above be given by the vertices $[v_i]$ of our graph $G$, and consider the diagram below:
$$\begin{CD} \dots @>>> H^1(Y) @>>> H^2(X,\partial X) \oplus H^2(W,\partial W) @>>> H^2(X\cup_Y W) @>>> \dots \\
 & & @V{\PD}VV @V{\PD}VV @V{\PD}VV \\
\dots @>>> H_2(Y) @>>> H_2(X)\oplus H_2(W) @>{A}>> H_2(X \cup_Y W) @>>> \dots \end{CD}$$
Here, the two groups on the left must vanish because the vertical maps are isomorphisms and $H^1(Y) = 0$. Hence, the lattice underlying $Q_X\oplus Q_W$ embeds in the one underlying $Q_{X\cup_Y W}$, and on passing to the lower row, there must exist some matrix $A$ with integral entries such that $-AA^t = Q_X\oplus Q_W$. When expressed like this, the last $\ell$ rows of $A$ are the images of the $[v_i]$ in $H_2(X\cup_Y W)$, and we shall use Lemma \ref{maximisers} to prove our claims about their structure. It is helpful to keep in mind that the $(i,j)$-th entry of $Q_X\oplus Q_W$ is in fact the standard negative-definite inner product of the $i$-th and $j$-th rows of $A$. We label the last $\ell$ rows of $A$ by $x,y_2,\dots,y_\ell$.

Our first task is to establish the structure of $y_i$ for $i = 2, \dots, \ell$. This has three parts: first, we show that all the non-zero entries are unital; second, that non-adjacent rows have no non-zero entries in the same spots; and third, that adjacent rows share only one spot with non-zero entries and that these overlapping entries are opposite in sign.

To achieve the first of these objectives, consider $y_i = (y_{i,j})_j$ for $2 \leq i \leq \ell$. Recall that $b_j := 2-a_j$ and define $K \in \Char(G)$ by
$$K =\begin{cases}
(0,b_2,\dots,b_{i-1},a_i,b_{i+1},\dots,b_\ell) &\text{if }n \text{ is even} \\
(-1,b_2,\dots,b_{i-1},a_i,b_{i+1},\dots,b_\ell) &\text{if }n \text{ is odd}\end{cases}.$$
Since there are no full tanks in $K$ it is clear that $K \in \mathcal{K}$ and hence it is a maximiser. To determine the value of $D^{p/q}([K])$, we need to find the first co-ordinate of the corresponding $K^\prime \in \mathcal{K}^\prime$ such that $K^\prime \sim K$ (by Theorem \ref{main}). If there is some $a_j \neq 2$ for $2 \leq j < i$ then $K \in \mathcal{K}^\prime$. Otherwise, consider $K^\prime = K + 2\sum_{j=2}^i \PD[v_j] \in \mathcal{K}^\prime$, which has first co-ordinate $2$ or $1$ depending on parity. By our hypothesis on the deficiencies, it follows that $D^{p/q}([K])=0$ (via Corollary \ref{min}). Thus by Lemma \ref{maximisers}, there is an $\alpha \in \{\pm 1\}^{b_2(X)+\ell}$ such that $\left<\alpha,y_i\right> = a_i$. Rephrased,
$$- \sum_j \alpha_jy_{i,j} = a_i = \sum_j y_{i,j}^2,$$
where the right hand side comes from the fact that $y_i^2 = a_i$. Consequently,
$$\sum_j (y_{i,j}^2 + \alpha_j y_{i,j}) = 0,$$
and since $\alpha_j = \pm1$, each summand is non-negative. Therefore, each summand must vanish, which in turn requires $y_{i,j} \in \{-1,0,1\}$. It is clear that for exactly $a_i$ values of $j$, $y_{i,j} \neq 0$.

With this step done, we now need to establish how the rows line up with each other. Thus, consider $2 \leq i < j \leq \ell$ such that $j - i \geq 2$, set $m := a_i$, and permute the basis of $H_2(X\cup_Y W)$, changing signs as necessary, so that $y_i = (1, \dots, 1, 0, \dots, 0)$. As before, there must be an $\alpha = c_1(\mathfrak{s})$ such that $c_1(\mathfrak{s}\vert_W) = K$ where
$$K =\begin{cases}
(0,b_2,\dots,b_{i-1},a_i,-a_{i+1},b_{i+2},\dots,b_{j-2},b_{j-1},a_j,b_{j+1},\dots,b_\ell) &\text{if }n \text{ is even}\\(-1,b_2,\dots,b_{i-1},a_i,-a_{i+1},b_{i+2},\dots,b_{j-2},b_{j-1},a_j,b_{j+1},\dots,b_\ell) &\text{if }n \text{ is odd}\end{cases},$$
which is obtained by pushing down at $v_{j-1}$ in
$$(*,b_2,\dots,b_{i-1}, a_i-2,-b_{i+1},-b_{i+2}, \dots, -b_{j-2},a_{j-1},a_j-2,b_{j+1},\dots,b_\ell)\in \mathcal{K}$$
and repeating to the left. In either case, exactly as before we find that $D^{p/q}([K]) = 0$ by showing the first co-ordinate of the corresponding $K^\prime$ is one of $0,\pm 1,2$. Then there is an $\alpha$ such that $\left<\alpha,y_i\right> = a_i$ and $\left<\alpha,y_j\right> = a_j$. The first of these statements tells us that $\alpha_k = -1$ for all $k = 1, \dots, m$.

Now, let $I = \{ k \leq m \vert y_{j,k} \neq0\}$. We claim that $I = \emptyset$. Indeed, as $$-a_j = -\left<\alpha,y_j\right> = \sum_{k\in I} \alpha_k y_{j,k} + \sum_{k>m} \alpha_k y_{j,k},$$
each summand on the RHS must be $-1$ or $0$. We know that $\alpha_k = -1$ for $k \in I$, so $y_{j,k} = 1$ for $k \in I$. Yet $y_i \cdot y_j = 0$, so $\sum_{k \in I} 1 = 0$, and $I = \emptyset$.

We repeat a similar argument for $j = i+1$, though our goal is to show that there is a unique element $k\in I$ and that $y_{i+1,k} = -1$. For $2\leq i \leq \ell$ we take
$$K =
\begin{cases}
(0,b_2,\dots,b_{i-1},a_i,a_{i+1}-2,b_{i+2},\dots,b_\ell) &\text{if }n \text{ is even}\\
(-1,b_2,\dots,b_{i-1},a_i,a_{i+1}-2,b_{i+2},\dots,b_\ell) &\text{if }n \text{ is odd}\end{cases},$$
and note again that $D^{p/q}([K])=0$. Permuting and changing signs as necessary, we may assume that $y_i = (1,\dots,1,0,\dots,0)$ and define $I$ as before. This time, however,
$$-a_{i+1} +2 = -\left<\alpha,y_{i+1}\right> = \sum_{k\in I} \alpha_k y_{i+1,k} + \sum_{k>m} \alpha_k y_{i+1,k},$$
and exactly one summand on the RHS is $1$. If that summand is in the second sum then all summands in the first are negative, so $y_{i+1,k} = 1$ for all $k \in I$. But then $-1 = y_i \cdot y_{i+1} = \sum_{k\in I} 1$, a contradiction. Therefore $y_{i+1,k} = -1$ for precisely one $k \in I$, and by an argument similar to the one just made, this $k$ is the unique element of $I$.

At this point, up to permuting the basis of $H_2(X \cup_Y W)$ and changing signs as necessary, we have established the form of the last $\ell - 1$ rows of $A$. What remains is to establish $x$. With this in mind, our first goal is to prove that it has the shape $(*,\dots,*,1,0,\dots,0)$ as outlined in the statement of the theorem.

Fix $i \in \{3, \dots, \ell\}$, let $x_1,\dots,x_m$ be the entries of $x$ in the same spots as the non-zero entries of row $y_i$, and let $x_{m+1},\dots,x_{b_2(X)+\ell}$ be the rest. Note that $m = a_i$. Again, change signs as necessary so that $y_{i,k} \geq 0$ for all $k$. Then as $x \cdot y_i = 0$, it follows that
\begin{equation}
\label{xmzero}
x_1 + \dots + x_{m-1} + x_m = 0.
\end{equation}
Our goal is to show that $x_k = 0$ for all $k \leq m$.

Define a set
$$\mathcal{S} = \left\{K \in \mathcal{M} \left\vert \left<K,[v_i]\right>= m, D^{p/q}([K]) = 0\right\} \right. .$$
Then for any $K \in \mathcal{S}$, we find an $\alpha$ according to Lemma \ref{maximisers} such that $\left<\alpha,y_i\right> = a_i = m$. Hence, $\alpha_1 + \dots + \alpha_m = -m$, whence $\alpha_k = -1$ for $k \leq m$. Indeed, all $\alpha \in \{\pm 1\}^{b_2(X)+\ell}$ satisfying these equations determine some $K \in \mathcal{S}$. Let $j = \left<K,[v_1]\right> = \left<\alpha,x\right>$. Then
$$j = x_1 + \dots + x_m - \sum_{k>m}\alpha_k x_k = - \sum_{k>m}\alpha_k x_k,$$
where we used \eqref{xmzero} to obtain the last equality. Thus the maximum value for $j$ as we vary $K \in \mathcal{S}$ is
\begin{equation}
\label{maxjone}
\sum_{k>m} \abs{x_k}.
\end{equation}

Now suppose, without loss of generality, that $x_m \leq x_k$ for all $k < m$, and define
$$\mathcal{S}^\prime = \left\{K \in \mathcal{M} \left\vert \left<K,[v_i]\right> = m-2,D^{p/q}([K])=0\right\}\right. .$$
Then similarly there is some $\beta \in \{\pm1\}^{b_2(X)+\ell}$ such that $\left<\beta,y_i\right> = m-2$, whence $\beta_k=-1$ for all values of $k \leq m$ except one. As before, all such $\beta$ determine a $K \in \mathcal{S}^\prime$. Hence
\begin{equation}
\label{j}
j = -\sum_{k=1}^{m} \beta_k x_k - \sum_{k>m}\beta_kx_k
\end{equation}
attains its maximal value when $\beta_k = -1$ for all $k < m$ and $\beta_m = 1$ (by choice of $x_m$). This maximal value is
\begin{equation}
\label{maxjtwo}
\sum_{k < m} x_k - x_m +  \sum_{k>m}\abs{x_k}.
\end{equation}

We claim that the two maxima given by \eqref{maxjone} and \eqref{maxjtwo} are in fact identical. Indeed, let $j_{\max} \leq n$ be the maximal integer $j$ such that $D^n([j]) = 0$ (note that $j_{\max} \geq 1$ by assumption on the number of vanishing deficiencies). Then if $j_{\max}$ can be attained by elements of $\mathcal{S}$ and $\mathcal{S}^\prime$, the claim must be true (since larger values are ruled out by the deficiency condition). Observe that
$$K = (j_{\max},-a_2,b_3,\dots,b_{i-2},b_{i-1},m,b_{i+1},\dots,b_\ell)$$
satisfies $D^{p/q}([K]) = 0$, since $K \in \mathcal{K}^\prime$: it is obtained by pushing down $$(j_{\max}-2,-b_2,-b_3,\dots, -b_{i-2},a_{i-1},m-2,b_{i+1},\dots,b_\ell) \in \mathcal{K}$$
 at $v_{i-1}$ and to the left. Similarly,
$$K^\prime = (j_{\max},b_2,\dots,b_{i-1},m-2,b_{i+1},\dots,b_\ell) \in \mathcal{S}^\prime \cap \mathcal{K}^\prime.$$
Thus the maximal values of $j$ are the same in both families. Hence,
$$\sum_{k<m}x_k - x_m = 0.$$
However, using \eqref{xmzero} to rewrite the first term, we find that $x_m = 0$. Therefore $x_k \geq 0$ for all $k\leq m$, by choice of $x_m$, and from \eqref{xmzero} again we find that $x_k = 0$.

Now consider row $i = 2$ and set $m = a_2$. We wish to show that $x_k = 0$ for all $k \leq m$ except one, for which $x_k = -1$. In this case, \eqref{xmzero} becomes
\begin{equation}
\label{xmminus}
x_1 + \dots + x_{m-1} + x_m = -1.
\end{equation}
Although we will keep the set $\mathcal{S}^\prime$ as defined before, this time we use
$$\mathcal{S} = \left\{K\in\mathcal{M} \left\vert \left<K,[v_2]\right> = -m, D^{p/q}([K])=0\right\}\right. ,$$
so that the maximum \eqref{maxjone} becomes
$$1 + \sum_{k>m} \abs{x_k}.$$
while the second maximum \eqref{maxjtwo} remains unchanged after we have defined $x_m$ to be the smallest of the $x_k$ for $k\leq m$. We claim that the two maxima are equal. Indeed, observe that $(j_{\max},-m,b_3+2,b_4,\dots,b_\ell)\in\mathcal{S}\cap\mathcal{K}^\prime$, and $(j_{\max},m-2,b_3,\dots, b_\ell) \in \mathcal{S}^\prime\cap \mathcal{K}^\prime$. Hence, comparing the maxima, we obtain:
$$\sum_{k<m} x_k - x_m = 1.$$
If we rearrange \eqref{xmminus}, as before, we find that $x_m = -1$. If $m = 2$, then \eqref{xmminus} yields the result. If, on the other hand, $m > 2$, then repeat this process with
$$\mathcal{S}^{\prime\prime} = \left\{ K \in \mathcal{M} \left\vert \left<K,[v_2]\right> = m-4, D^{p/q}([K]) \right\} \right.$$
and $x_{m-1}$ defined to be the next smallest after $x_m$. We find that $x_{m-1}+x_m = -1$, from which $x_{m-1} = 0$. Hence $x_k \geq 0$ for all $k \leq m-1$, and it follows from \eqref{xmminus} that $x_k = 0$ for $k \leq m-1$, as required.

By this point we are finally almost there. What remains to establish is the changemaker condition on $x$. Using the labels $\sigma_i$ established, and defining $\sigma_0$ to be the other unital entry, change signs as usual so that $\sigma_i \geq 0$. Let
$$J:= \left\{ \left<K,[v_1]\right> \left\vert K \in \mathcal{M}, \left<K,[v_2]\right> = -a_2, D^{p/q}([K])=0 \right\} \right. ,$$
and observe that $J$ consists of all values $j \equiv n$ from $2-j_{\max}$ to $j_{\max}$. The asymmetry is a result of the fact that if $K = (j,-a_2,*,\dots,*)$ is a relevant maximiser with appropriate values *, then $K \in \mathcal{K}^\prime$ if $j \geq 1$, whereas $K \sim K^\prime := (j-2,a_2,*,\dots,*)\in \mathcal{K}^\prime$ if $j < 1$. Thus, in light of the evaluation on $[v_2]$,
$$j = \sigma_0 - \sum_{i\geq 1} \alpha_i \sigma_i = 1 - \sum_{i\geq 1} \alpha_i \sigma_i$$
attains these values too. By writing $\alpha_i = -1 + 2\chi_i$ (where $\chi_i \in \{0,1\}$), we obtain
$$j = 1 + \sum_{i \geq 1} \sigma_i - 2\sum_{i\geq 1} \chi_i\sigma_i = j_{\max} - 2\sum_{i\geq 1} \chi_i\sigma_i,$$
and thus $\left\{\left. \sum_{i\geq 1} \chi_i\sigma_i \right\vert \chi_i \in \{0,1\} \right\}$ consists of all integers from $0$ to $\sum_{i\geq 1} \sigma_i$. This is precisely the condition for a changemaker set.

\subsection{Proof of Theorem \ref{changemaker} when $0 < p < q$}

This proof is extremely similar to the previous one, so we only outline the differences. Crucially, $n = 1$, so via Theorem \ref{main} it follows that all the deficiencies $D^{p/q}([K])$ vanish for any maximiser $K$. This fact makes the proof much easier.

To ensure that all non-zero entries in $y_i$ are $\pm1$ for all $i$, we use the maximiser
$$K = (1,b_2,\dots,b_{i-2},-a_{i-1},a_i,b_{i+1},\dots,b_\ell).$$
We know that this choice of $K$ is in fact a maximiser since it is a push-down of
$$(1,-b_2,\dots,-b_{i-2},-b_{i-1},-b_i,b_{i+1},\dots,b_\ell) \in \mathcal{K}.$$
The same argument as above then yields our results.

To show that rows $y_i$ and $y_j$ where $j - i \geq 2$ do not overlap (i.e. share non-zero entries in the same spots), one must be a little more careful. Supposing that $a_i \neq 2$ (i.e. that $a_i > 2$), one uses the maximiser
$$K = (1,b_2,\dots,b_{i-2},-a_{i-1},a_i,-a_{i+1},b_{i+2},\dots,b_{j-2},b_{j-1},a_j,b_{j+1},\dots,b_\ell),$$
which, by pushing-down at $v_1$ to the right and at $v_{j-1}$ to the left, can be obtained from
$$(1,-b_2,\dots,-b_{i-2},-b_{i-1},a_i-4,-b_{i+1},-b_{i-2},\dots,-b_{j-2},a_{j-1},a_j-2,b_{j+1},\dots,b_\ell) \in \mathcal{K}.$$
If instead $a_i = 2$ and there is some $k \in \{2,\dots,i-1\}$ such that $a_k \neq 2$, then we use
$$K = (1,b_2,\dots, b_{i-1},a_i,-a_{i+1},b_{i+2},\dots,b_{j-2},b_{j-1},a_j,b_{j+1},\dots,b_\ell),$$
a push-down of $(1,b_2,\dots,b_{i-1},a_i-2,-b_{i+1},-b_{i+2},\dots,-b_{j-2},a_{j-1},a_j-2,b_{j+1},\dots,b_\ell) \in \mathcal{K}$. Finally, if $a_k = 2$ for all $k = 2,\dots,i$, the fact that $y_i \cdot y_j = 0$ implies that if $y_j$ and $y_i$ overlap, then they overlap in two places. Consequently, since $y_{i-1} \cdot y_i = -1$, it follows that $y_j$ also overlaps with $y_{i-1}$, and hence as $y_{i-1} \cdot y_j = 0$, that $y_j$ overlaps in two places with $y_{i-1}$. Iterating this, we find eventually that $y_j$ and $x$ overlap, violating the condition $x \cdot y_j = 0$, since $x$ contains precisely one non-zero entry (as $x^2 = 1$). Hence, $y_i$ and $y_j$ cannot overlap.

To show that $y_i$ and $y_{i+1}$ have only one overlap (in which they are opposite in sign), one uses
$$K = (1,b_2,\dots,b_{i-2},-a_{i-1},a_i,a_{i+1}-2,b_{i+2},\dots, b_\ell),$$
which is a push-down of $(1,-b_2,\dots,-b_{i-2},-b_{i-1},a_i-2,a_{i+1}-2,b_{i+2},\dots,b_\ell) \in \mathcal{K}$.

Because $x^2 = n = 1$, the rest of the computation is trivial, and the theorem is proved.

\subsection{Proof of Theorem \ref{changemaker} when $q = 1$}

This last proof is even easier than in the previous section. Since none of the rows $y_i$ exist, we need only prove the statement about $x$; in the absence of the other rows, the only adjustments we need make to the proof of the changemaker statement are to define instead
$$J := \left\{ \left<K,[v_1]\right> \left\vert K \in \mathcal{M}, D^{p/q}([K])=0 \right\} \right.,$$
and remove the assumption that $\sigma_0 = 1$. Once this is done, the modified statement follows easily.

\subsection{A Remark on Vanishing Deficiencies}

In its current form, the reader will hopefully have noticed the asymmetry in Theorem \ref{changemaker} concerning the number of deficiencies which vanish. If $n$ is odd, we only require one to vanish, but if $n$ is even, then we require $q - r + 1$. It is possible that by choosing a different function $\mathfrak{r}$ we can remove this asymmetry, but as of the current writing we have been unable to do so.

What we can say, however, is that in the special case when $q = 2$, some simplifications are possible (c.f. \cite{GreeneBraid, OSUnknot}). In practice, the following proposition is most readily applied when $\mathfrak{t}$ is the unique $\Spin$-structure.

\begin{prop}
\label{spinsufficient}
In the case $q = 2$, Theorem \ref{changemaker} applies if we use a weaker assumption on the number of vanishing deficiencies. Namely, if $n$ is even, we require only that
$$d(Y,\mathfrak{t}) - d(S^3_{-p/q}(U),\mathfrak{t}) = 0,$$
for some $\mathfrak{t} \in \Spinc(Y)$.
\end{prop}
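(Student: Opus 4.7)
My plan is to adapt the proof of Theorem~\ref{changemaker} from Section~\ref{mainproof} to the special case $q = 2$ with $n$ even. Here the continued fraction $p/q = [n, 2]$ gives $\ell = 2$, $a_2 = 2$, and $r = 1$, which simplifies the structural claim considerably.

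First I would reduce the hypothesis: since $p = 2n - 1$ is odd, $Y$ has a unique $\Spin$-structure $\mathfrak{t}_{\Spin}$, which by Theorem~\ref{main} and Corollary~\ref{min} is the unique element of the exceptional singleton fibre of $\mathfrak{r}$ over the $D^n$-minimiser $\mathfrak{t}_{n/2}$, and hence the unique minimiser of $D^{p/q}$ on $\Spinc(Y)$. Any $\mathfrak{t}$ with $D^{p/q}(\mathfrak{t}) = 0$ must attain this minimum, so the hypothesis ``$D^{p/q}(\mathfrak{t}) = 0$ for some $\mathfrak{t}$'' is equivalent to $D^{p/q}(\mathfrak{t}_{\Spin}) = 0$.

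Next, I would invoke the lattice-embedding argument at the beginning of Section~\ref{mainproof} (Donaldson's theorem applied to the closed, simply connected, negative-definite 4-manifold $X \cup_Y W$) to produce $A$ with $-AA^T = Q_X \oplus Q_W$, independently of any deficiency hypothesis. Writing its last two rows as $x$ and $y_2$, the relations from $Q_W$ give $x \cdot x = n$, $y_2 \cdot y_2 = 2$, and $x \cdot y_2 = -1$. Over the integers, $y_2 \cdot y_2 = 2$ forces $y_2$ to have exactly two non-zero $\pm 1$ entries, so the structural claim about $y_2$ holds automatically; and $x \cdot y_2 = -1$ then forces $x$ and $y_2$ to share exactly one column, with opposite signs there.

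Finally, I would apply Lemma~\ref{maximisers} to the maximiser $K = (0,0) \in \mathcal{M}$ for $\mathfrak{t}_{\Spin}$ (which is the sole element of its equivalence class in $\mathcal{M}$, since $(0,0)$ has no peaks to push down) to obtain some $\alpha \in \{\pm 1\}^{b_2(X) + 2}$ with $\alpha \cdot x = 0$ and $\alpha \cdot y_2 = 0$. After permuting and sign-flipping the basis of $H_2(X \cup_Y W)$ to bring $y_2$ into the form $(0, -1, 1, 0, \ldots, 0)$, the constraint $\alpha \cdot x = 0$ pins $x$ into the shape $(\sigma_1, 1, 0, \ldots, 0)$. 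The main obstacle I expect is that a single application of Lemma~\ref{maximisers} yields only one pair of orthogonality conditions, whereas Section~\ref{mainproof}'s argument used multiple vanishing deficiencies to generate enough constraints for the changemaker condition; here I would exploit that for $r = 1$ the changemaker condition is simply $\sigma_1 \leq 1$, which the equation $\alpha_1 \sigma_1 + \alpha_2 = 0$ with $\alpha_i \in \{\pm 1\}$ forces immediately, thereby delivering both the changemaker property and the identification $\sigma_1 = 1$ needed to complete the structural claim.
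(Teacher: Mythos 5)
Your reduction of the hypothesis to $D^{p/q}(\mathfrak{t}_{\Spin})=0$ is correct (it follows from $\mathfrak{t}_{\Spin}$ lying in the exceptional fibre, Corollary~\ref{min}, and non-negativity of deficiencies), and the observation that $y_2\cdot y_2=2$ automatically forces $y_2\in\{0,\pm 1\}^{b_2(X)+2}$ is also fine. But the rest of the argument has two serious problems. First, $x\cdot y_2=-1$ does \emph{not} by itself force $x$ and $y_2$ to overlap in a single column with opposite signs: with $y_2=(1,1,0,\dots)$, for instance, $x=(3,-4,\dots)$ satisfies $x\cdot y_2=-1$. Establishing the shape of $x$ is precisely where the deficiency hypothesis enters, so you cannot treat it as automatic. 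Second, the claimed shape $x=(\sigma_1,1,0,\dots,0)$ is incompatible with the lattice relation $x\cdot x = -(Q_W)_{11}=n$: it would force $\sigma_1^2+1=n$, which fails for all $n$ with $n-1$ not a perfect square. The changemaker entries of $x$ occupy the entire $Q_X$ block (of size $b_2(X)$); there is one of them for each basis vector of $H_2(X\cup_Y W)$ not used by the $y_i$'s, and their squares must sum to $n-1$. Consequently, the constraint set you would need is not a single orthogonality relation $\alpha_1\sigma_1+\alpha_2=0$ but a whole family of relations indexed by the maximisers $K$ with $D^{p/q}([K])=0$; one application of Lemma~\ref{maximisers} to $K=(0,0)$ does not pin $x$ down at all.

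The paper's own argument sidesteps the need to re-derive the structure of $x$ entirely. Setting $y=(1,1,0,\dots,0)$, one has $x_1+x_2=-1$ and (taking $x_2\le x_1$) $x_1\ge 0$. One then looks at $j_{\max}$, the largest $\langle K,[v_1]\rangle$ over maximisers $K$ with $\langle K,[v_2]\rangle=0$ and $D^{p/q}([K])=0$. The hypothesis gives $j_{\max}\ge 0$; if $j_{\max}>0$ then a second $\Spinc$-structure besides $\mathfrak{t}_{\Spin}$ already has vanishing deficiency, so the hypothesis of Theorem~\ref{changemaker} (namely $q-r+1=2$ vanishing deficiencies) is met and one invokes that theorem wholesale. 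The case $j_{\max}=0$ is ruled out by computing $j_{\max}=x_1-x_2+\sum_{i\ge 3}|x_i|$ (the same maximisation over $\alpha\in\{\pm1\}^{b_2(X)+\ell}$ with $\alpha\cdot y=0$ that you began setting up) and substituting $x_2=-1-x_1$ to obtain $2x_1+1+\sum_{i\ge3}|x_i|=0$, a contradiction since every term is non-negative. So the key missing idea in your proposal is that the proposition is proved by showing the \emph{stronger} hypothesis of Theorem~\ref{changemaker} holds automatically, not by reproving the structure theorem with fewer constraints.
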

\begin{proof}
When $q = 2$, notice that $p/q = [n,2]$. We relabel row $y_2$ as $y$ for convenience, and set $y = (1,1,0,\dots,0)$ without loss of generality. Then $x \cdot y = -1$ tells us that
\begin{equation}
\label{twoxs}
x_1 + x_2 = -1.
\end{equation}
We let $x_2 \leq x_1$, also without loss of generality. Observe that $x_1 \geq 0$, else $x_1 + x_2 \leq -2$.

Now define a set
$$\mathcal{S} = \left\{ K \in \mathcal{M} \left\vert \left<K, [v_2]\right> = 0, D^{p/q}([K]) = 0\right\}\right.,$$
and observe that the maximal value $j_{\max}$ of $\left< K, [v_1]\right>$ obtained by letting $K$ range over $\mathcal{S}$ satisfies $j_{\max} \geq 0$, since we know that at least one deficiency vanishes. If $j_{\max} > 0$, however, then this means that at least $q-r+1$ deficiencies vanish, and Theorem \ref{changemaker} applies. Thus, suppose $j_{\max} = 0$. By arguments similar to those in Section \ref{mainproof}, we find that
$$j_{\max} = x_1 - x_2 + \sum_{i\geq 3} \abs{x_i} = 0,$$
and on substituting from \eqref{twoxs},
$$2x_1 + 1 + \sum_{i\geq 3} \abs{x_i} = 0.$$
Since none of the terms on the LHS are negative, we have a contradiction. Hence $j_{\max} \neq 0$, and Theorem \ref{changemaker} applies.
\end{proof}

\bibliography{symbib}
\bibliographystyle{plain}

\end{document}